
\documentclass[10pt]{amsart}
\usepackage{amsmath}
\usepackage{amsthm}
\usepackage{amssymb}
\usepackage{mathtools}
\usepackage{graphicx}
\usepackage[multiple]{footmisc}
\usepackage{multicol}
\usepackage{enumerate}
\usepackage{tikz}
\usetikzlibrary{arrows}

\usepackage[pdfauthor={Jonas Reitz and Kameryn J Williams},
    pdftitle={Inner Mantles And Iterated HOD},
    hidelinks 
]{hyperref}

\usepackage[backend=bibtex,style=alphabetic,maxbibnames=15,maxcitenames=6,dateabbrev=false]{biblatex}
\renewcommand{\UrlFont}{\sffamily\small} 
\renewbibmacro{in:}{\ifentrytype{article}{}{\printtext{\bibstring{in}\intitlepunct}}} 
\DeclareFieldFormat{url}{{\UrlFont\url{#1}}} 
\DeclareFieldFormat{urldate}{
  (version \thefield{urlday}\addspace%
  \mkbibmonth{\thefield{urlmonth}}\addspace%
  \thefield{urlyear}\isdot)}
\DeclareFieldFormat{eprint:arxiv}{
  \ifhyperref
    {\href{http://arxiv.org/abs/#1}{%
        arXiv\addcolon\nolinkurl{#1}}\iffieldundef{eprintclass}{}{\UrlFont{\mkbibbrackets{\thefield{eprintclass}}}}}
    {arXiv\addcolon\nolinkurl{#1}\iffieldundef{eprintclass}{}{\UrlFont{\mkbibbrackets{\thefield{eprintclass}}}}}}

\bibliography{HamkinsBiblio,LogicBiblio,local}

\theoremstyle{plain}
\ifdefined\theorem \else \newtheorem{theorem}{Theorem} \fi
\ifdefined\maintheorem \else \newtheorem{maintheorem}[theorem]{Main Theorem} \fi
\ifdefined\lemma \else \newtheorem{lemma}[theorem]{Lemma} \fi
\ifdefined\lemmaschema \else  \fi
\ifdefined\proposition \else \newtheorem{proposition}[theorem]{Proposition} \fi
\ifdefined\corollary \else  \fi
\ifdefined\fact \else  \fi
\ifdefined\problem \else  \fi
\ifdefined\conjecture \else \newtheorem{conjecture}[theorem]{Conjecture} \fi
\ifdefined\question \else \newtheorem{question}[theorem]{Question} \fi
\ifdefined\observation \else  \fi
\newtheorem*{theorem*}{Theorem}
\newtheorem*{lemma*}{Lemma}
\newtheorem*{proposition*}{Proposition}

\theoremstyle{definition}
\ifdefined\definition \else \newtheorem{definition}[theorem]{Definition} \fi
\theoremstyle{remark}
\ifdefined\remark \else  \fi
\ifdefined\remarks \else  \fi
\ifdefined\example \else  \fi
\ifdefined\claim \else \newtheorem{claim}[theorem]{Claim} \fi
\ifdefined\acknowledgment \else  \fi
\ifdefined\dedication \else  \fi
\ifdefined\case \else  \fi

\newcounter{my_enumerate_counter}

\newcommand\comment[1]{}

\newcommand\Pcal{\mathcal{P}}

\newcommand\Abb{\mathbb{A}}
\renewcommand\Bbb{\mathbb{B}}
\newcommand\Cbb{\mathbb{C}}

\newcommand\Mbb{\mathbb{M}}
\newcommand\Nbb{\mathbb{N}}
\newcommand\Obb{\mathbb{O}}
\newcommand\Pbb{\mathbb{P}}
\newcommand\Qbb{\mathbb{Q}}
\newcommand\Rbb{\mathbb{R}}
\newcommand\Sbb{\mathbb{S}}

\newcommand{\dom}{\operatorname{dom}}

\ifdefined\alt \else
  \newcommand{\alt}{\operatorname{alt}}
\fi

\newcommand{\one}{\mathbf{1}}
\newcommand{\zero}{\mathbf{0}}

\newcommand{\forces}{\Vdash}

\newcommand\axiom{\mathsf}

\newcommand\GCH{\axiom{GCH}}

\newcommand\ZFC{\axiom{ZFC}}

\newcommand\ETR{\axiom{ETR}}




\newcommand\class{\mathrm}

\newcommand\HOD{\class{HOD}}
\newcommand\Ord{\class{Ord}}

\newcommand\Add{\class{Add}}

\newcommand\tail{\textrm{tail}}

\newcommand{\seq}[1]{\left\langle #1 \right\rangle}

\renewcommand{\epsilon}{\varepsilon}

\newcommand\mand{\textrm{ and }}

\newcommand\card[1]{\left\lvert #1 \right\rvert}

\newcommand\rest{\upharpoonright}

\newcommand\powerset{\Pcal}
\newcommand\rank{\operatorname{rank}}



\newcommand\Con{\operatorname{Con}}

\ifdefined\Form \else
  \newcommand\Form{\axiom{Form}}
\fi

\renewcommand{\phi}{\varphi}

\title{Inner Mantles and Iterated HOD}

\author{Jonas Reitz}
\address[Jonas Reitz]{New York City College of Technology of The City University of New York, Mathematics, 300 Jay Street, Brooklyn, NY 11201}
\email{jreitz@citytech.cuny.edu}

\DeclareRobustCommand{\okina}{%
  \raisebox{\dimexpr\fontcharht\font`A-\height}{%
    \scalebox{0.8}{`}%
  }%
}
\author{Kameryn J Williams}
\address[Kameryn J Williams]{
University of Hawai\okina{}i at M\=anoa \\
Department of Mathematics \\
2565 McCarthy Mall, Keller 401A \\
Honolulu, HI  96822\\
USA}
\email{kamerynw@hawaii.edu}
\urladdr{http://kamerynjw.net}

\thanks{We thank the referee for their helpful comments.}

\begin{document}

\begin{abstract}
We present a class forcing notion $\Mbb(\eta)$, uniformly definable for ordinals $\eta$, which forces the ground model to be the $\eta$-th inner mantle of the extension, in which the sequence of inner mantles has length at least $\eta$. This answers a conjecture of Fuchs, Hamkins, and Reitz \cite{FuchsHamkinsReitz2015:Set-theoreticGeology} in the positive. We also show that $\Mbb(\eta)$ forces the ground model to be the $\eta$-th iterated $\HOD$ of the extension, where the sequence of iterated $\HOD$s has length at least $\eta$. We conclude by showing that the lengths of the sequences of inner mantles and of iterated $\HOD$s can be separated to be any two ordinals you please. 
\end{abstract}

\maketitle

\section{Introduction and history}

A \emph{ground} is an inner model $W$ so that there is some forcing notion $\Pbb \in W$ and $G \in V$ generic for $\Pbb$ over $W$ so that $W[G] = V$. It is a remarkable result, due independently to Laver \cite{Laver2007:CertainVeryLargeCardinalsNotCreated} and Woodin \cite{Woodin2004:CHMultiverseOmegaConjecture,Woodin2004:RecentDevelopmentsOnCH}, that the grounds are uniformly definable, which allows for the grounds to be quantified over in a first-order context. The \emph{mantle}, introduced in \cite{FuchsHamkinsReitz2015:Set-theoreticGeology}, is the intersection of all the grounds. It follows from work of Usuba \cite{usuba2017} that the mantle is preserved by set forcing and is the largest set-forcing-invariant inner model.

Fuchs, Hamkins, and Reitz \cite{FuchsHamkinsReitz2015:Set-theoreticGeology} produced a class forcing notion which forces the ground model to be the mantle of the forcing extension. In that same article they conjectured that every model of $\ZFC$ is the $\eta$-th inner mantle of another model, in which the sequence of inner mantles does not stabilize before $\eta$. The first main theorem of our paper answers their conjecture in the positive.

\begin{maintheorem}
Let $\eta$ be an ordinal. There is a class forcing notion $\Mbb(\eta)$, uniformly definable in $\eta$, so that forcing with $\Mbb(\eta)$ forces the ground model to be the $\eta$-th inner mantle of the extension, in which the sequence of inner mantles does not stabilize before $\eta$. Indeed, forcing with $\Mbb(\eta)$ forces the ground model to be the $\eta$-th iterated $\HOD$ of the extension, with the sequence of iterated $\HOD$s not stabilizing before $\eta$.
\end{maintheorem}

As our second main theorem we show that the lengths of these sequences can be separated.

\begin{maintheorem}
Let $\zeta$ and $\eta$ be ordinals.
\begin{itemize}
\item There is a class forcing, uniformly definable in $\zeta$ and $\eta$, which forces the sequence of inner mantles to have length $\zeta + \eta$ and the sequence of iterated $\HOD$s to have length $\zeta$.
\item There is a class forcing, uniformly definable in $\zeta$ and $\eta$, which forces the sequence of inner mantles to have length $\zeta$ and the sequence of iterated $\HOD$s to have length $\zeta + \eta$.
\end{itemize}
\end{maintheorem}

\begin{definition}
The sequence of \emph{inner mantles} can be defined as follows. The zeroth inner mantle $M^0$ is simply the universe $V$. Given the $\alpha$-th inner mantle $M^\alpha$, the $(\alpha+1)$-th mantle is $M^{\alpha+1} = M^{M^\alpha}$, the mantle of the $\alpha$-th inner mantle. For limit stages $\lambda$, the $\lambda$-th mantle is $M^\lambda = \bigcap_{\alpha < \lambda} M^\alpha$, the intersection of the previous inner mantles. Say that the sequence of inner mantles \emph{stabilizes} if there is some $\alpha$ so that $M^\alpha = M^{\alpha+1}$. For $\alpha$ least such that this happens we say that the sequence \emph{stabilizes at $\alpha$}. If the sequence stabilizes at $\alpha$ we also say that it has \emph{length $\alpha$}, and if it does not stabilize by $\alpha$ we also say that it has \emph{length at least $\alpha$}.
\end{definition}

Observe that if we allow the sequence of inner mantles to stabilize before $\eta$, it is trivial to get a model which is the $\eta$-th inner mantle of some outer model. This can be done by taking any model of the Ground Axiom \cite{Reitz2007:TheGroundAxiom}, which asserts that $V = M$. In this case, for any $\eta$ the $\eta$-th inner mantle is simply $V$.

Formally, we will formulate our work in a weak second-order set theory, with proper classes as actual objects rather than mere syntactic sugar. Namely, we will take a version of G\"odel--Bernays set theory with predicative comprehension, comprehension for formulae which only quantify over sets, as our background theory. We will assume that the axiom of choice holds for sets, but will not assume any form of global choice.\footnote{A precise axiomatization can be found in the appendix of \cite{Reitz2006:Dissertation}.}
Given any model of $\ZFC$, attaching its definable classes gives a model of this theory, but there are many models of this theory which have undefinable classes.\footnote{For example, if $\kappa$ is inaccessible then take $V_\kappa$ along with its powerset for the classes. A downward L\"owenheim--Skolem argument then gives countable models with undefinable classes.} 
So our approach is  more general than working in $\ZFC$ with definable classes, definable class forcing notions, and so forth.

With this context in mind, the above definition of the inner mantles is formalized as asserting the existence of certain sequences of classes. Namely, we say that the $\alpha$-th inner mantle exists if there is a sequence $\vec M$ of classes of length $\alpha+1$ which satisfies the recursive definition of the sequence of inner mantles. It is obvious that for finite standard $n$, the $n$th inner mantle always exists. We can inductively in the metatheory define the $n+1$ length sequence. But the complexity of the definitions increase as $n$ does, so it is far from clear that the $\omega$-th mantle always exists.\footnote{For the analogous question about the sequence of iterated $\HOD$s, McAloon \cite{McAloon1971:ConsistencyResultsAboutOrdinalDefinability} showed that there are models with $\ZFC$ whose $\omega$-th iterated $\HOD$ does not exist.}
We will not address the general question of when the $\eta$-th inner mantle exists. 
In the models we produce by forcing with $\Mbb(\eta)$ we will always have that the $\eta$-th inner mantle exists, so that question will not affect us.

The reader may compare our results with previous results about iterated $\HOD$, by McAloon \cite{McAloon1971:ConsistencyResultsAboutOrdinalDefinability}, Zadro\.zny \cite{Zadrozny83:IteratingOrdinalDefinability}, Jech \cite{Jech:ForcingWithTreesandOrdinalDefinability} and others. 

\begin{definition} \label{definition.iteratedHOD}
The \emph{sequence of iterated $\HOD$s} is defined similarly to the sequence of inner mantles, but taking the $\HOD$ at each stage rather than the mantle. Namely, $\HOD^0 = V$, $\HOD^{\alpha+1} = \HOD^{\HOD^\alpha}$, and $\HOD^\lambda = \bigcap_{\alpha < \lambda} \HOD^\alpha$ for limit $\lambda$. 
\end{definition}

These results on iterated $\HOD$ were an inspiration for much of our current work. The strongest result in that context is due to Zadro\.zny \cite{Zadrozny83:IteratingOrdinalDefinability}, who showed that any model of $\ZFC$ is the $\eta$-th iterated $\HOD$ of some generic extension for any ordinal $\eta$ or for $\eta = \Ord$, where the sequence does not stabilize before $\eta$. So the new content of our first main theorem is that the analogous fact is true for inner mantles.

The main open question left from our work is whether we can take the inner mantle sequence to $\Ord$ and beyond. The definition of the inner mantle can be carried out along any class well-order, even one longer than $\Ord$. Given a class well-order $\Gamma$, is there a class forcing notion $\Mbb(\Gamma)$ so that forcing with $\Mbb(\Gamma)$ makes the ground model the $\Gamma$-th inner mantle of the extension, where the sequence of inner mantles does not stabilize before $\Gamma$? Can we get models where the sequence of inner mantles does not stabilize at any any class well-order?

In section \ref{sec:meta} we will give the definition of $\Mbb(\eta)$ and prove some basic lemmata about it. The first main theorem is proved in section \ref{sec:main-theorem}, and the second main theorem is proved in section \ref{sec:different-lengths}.

\section{The definition of the forcing} \label{sec:meta}

The basic building block we will employ is the forcing which makes the ground model equal to the mantle of the extension ($V=M^{V[G]}$). This forcing is covered in detail in \cite{FuchsHamkinsReitz2015:Set-theoreticGeology} and will be described further below, but for now it suffices to note that it is a class product that performs coding on a class $R$ of regular cardinals -- that is, at each cardinal $\alpha \in R$, the forcing chooses whether to make the $\GCH$ hold or fail at $\alpha$, and forces accordingly.  The class of coding points $R$ should be chosen to satisfy two properties:  first, that the coding at different cardinals in $R$ should not interfere with one another, and second that $R$ be a definable class in the extension. This is straightforward when the $\GCH$ holds in $V$, and slightly more complicated in the general case.  For ease of presentation, we will assume for now that an appropriate class of coding points $R$ has been determined, leaving the definition of $R$ for after the definition of the forcing.

Note that simply iterating the $V=M^{V[G]}$ forcing a finite number of times gives rise to a model $V[G_1*G_2*\dots*G_n]$ with a finite sequence of mantles leading back to the original model $V$.  Unfortunately, the correspondence between the stages of the forcing iteration and the successive mantles is reversed, so that the first mantle in the extension is obtained by removing the generic corresponding to the last stage of the forcing iteration, i.e.  $M^{V[G_1*G_2*\dots*G_n]} = V[G_1*G_2*\dots*G_{n-1}]$. 
Thus, to obtain a transfinite sequence of inner mantles is not so simple as to iterate this forcing $\eta$ many times. We will instead need to perform an iteration along a reversed well-order, with each successive mantle stripping another generic from the end of the iteration, taking intersections at limits.

Fix an ordinal $\eta$. Intuitively, we would like to define an iteration of class forcing along the non-well-founded order $\eta^\star$.  In the resulting extension, the mantle sequence will correspond exactly to the intermediate generic extensions given by the initial segments of the iteration.  This forcing will be presented in two ways, and the interplay between these views will allow us to analyze the behavior of the mantles in the resulting extension.  Initially we will present the definition and analysis of the forcing as a traditional iteration of set forcing of order type $\Ord$, and then, with a change of perspective, we will describe how it can be viewed as a non-well-founded iteration of class forcing of order type $\eta^\star$, where $\eta^\star$ is the reversed order of $\eta$.

For later arguments we will need our forcing to be $\mathord\le\eta^+$-closed, where $\eta^+$ is the least cardinal $> \eta$. So we will assume that $R$ consists only of cardinals $>\eta^+$. Observe that this assumption can be made without loss, as if $R$ is an appropriate class of cardinals then any tail of $R$ is also appropriate for our forcing. We will partition $R$ into $\eta$ many cofinal classes: $R_i$ for $i < \eta$ consists of the $(\eta \cdot \zeta + i)$-th elements of $R$, arranged in ascending order, for some $\zeta$. That is, a cardinal in $R$ is in $R_i$ if the index of its position in $R$ is equivalent to $i$ modulo $\eta$. For a cardinal $\alpha \in R$ let $i(\alpha)$, the index of $\alpha$, be the unique $i < \eta$ so that $\alpha \in R_i$. Let $R_{>i} = \bigcup_{j > i} R_j$ and $R_{\ge i} = \bigcup_{j \ge i} R_j$.

\begin{definition} \label{def:main-def}
$\Mbb(\eta)$ is the class forcing whose conditions are set-sized functions $p$ with domain an initial segment of $R$ so that for all $\alpha \in \dom p$ we have $p(\alpha)$ is an $\Mbb(\eta) \rest (R_{>i(\alpha)} \cap \alpha)$-name for a condition in $\Add(\alpha,(2^{<\alpha})^{++}) \oplus \Add(\alpha^+,1)$, the forcing to generically make $\GCH$ either fail or hold at $\alpha$.\footnote{In the case where the ground model $V$ satisfies $\GCH$, it suffices to use the simpler forcing $\Add(\alpha,\alpha^{++}) \oplus \zero$ at stage $\alpha$.}
Conditions $p$ can have arbitrary support.
Given $p,q \in \Mbb(\eta)$, we say $q \le p$ if $\dom(q) \supseteq \dom(p)$ and for all $\alpha \in \dom(p)$ we have that $p \rest (R_{>i(\alpha)} \cap \alpha)$ forces over $\Mbb(\eta) \rest (R_{>i(\alpha)} \cap \alpha)$ that $q(\alpha) \le p(\alpha)$, where $\Mbb(\eta) \rest A$ means the collection of the restrictions of $p \in \Mbb(\eta)$ to $A$.
\end{definition}

This forcing $\Mbb(\eta)$ is a linear iteration of length $\Ord$. However, the reader should be aware that the forcing at stage $\alpha$ is defined in a certain inner model of the forcing extension by $\Mbb(\eta) \rest \alpha$, the restriction to stages below $\alpha$, rather than in the full extension. Accordingly, the analysis requires a little more care than just citing well-known results about iterations. See below for further discussion. 

Having defined $\Mbb(\eta)$, let us now consider the coding points $R$. If $\GCH$ holds then it is clear that we can pick an appropriate class $R$ of coding points---take the regular cardinals. But if $\GCH$ fails then we need a little more care. To motivate our choice of $R$, let us consider which cardinals are preserved by forcing with $\Add(\alpha,(2^{<\alpha})^{++})$. This forcing is $\mathord<\alpha$-closed, so it preserves cardinals $\leq\alpha$. And it has the $(2^{<\alpha})^{+++}$-chain condition, so it preserves cardinals $>(2^{<\alpha})^{++}$ (the alternative forcing in the lottery sum at stage $\alpha$, $\Add(\alpha^+,1)$, is even better behaved, preserving cardinals $\leq \alpha$ and $> 2^{<\alpha}$). So if we want our coding points to not interfere with each other then we need the next coding point after $\alpha$ to be at least $(2^{<\alpha})^{++}$. But we also want that $R$ is preserved by the forcing, so we will space things out further to make it easy to see that this is the case. This extra spacing out is harmless, because real estate is cheap when you have $\Ord$ much room. Note that forcing with $\Add(\alpha,(2^{<\alpha})^{++})$ or $\Add(\alpha^+,1)$ only affects a finite interval of the beth-numbers, and therefore preserves the limit elements of the beth-numbers. So we can use them as guideposts for our coding points. Specifically, we will take $R$ to consist of those successor cardinals $\delta^+$ where $\delta$ is a strong limit but not a limit of strong limits.  That is, we define
\[
R = \{ (\beth_{\omega \cdot (\beta + 1)})^+ : \beta \in \Ord \mand (\beth_{\omega \cdot (\beta + 1)})^+ > \eta^+ \}.
\]

The remainder of this section will provide an analysis of the forcing $\Mbb(\eta)$ with coding points $R$, establishing the basic facts that will be used in the proof of the main theorem---namely, that $\Mbb(\eta)$ preserves $\ZFC$, preserves $R$, and that in the extension the $\GCH$ holds at each $\alpha\in R$ exactly according to the winner of the lottery at stage $\alpha$.  Let us begin by considering $\Mbb(\eta)$ as an iteration of set-sized forcing indexed by the class $R$. In the following section, in the proof of the main theorem, we will shift our view to consider $\Mbb(\eta)$ as an iteration of class forcing of order type $\eta^\star$.

Forcing in $\Mbb(\eta)$ at stage $\alpha \in R$ is defined as the partial order $\Add(\alpha,(2^{<\alpha})^{++}) \oplus \Add(\alpha^+,1)$, but rather than taking this definition in the extension up to $\alpha$---namely $V^{\Mbb(\eta) \rest R\cap \alpha}$---instead we restrict the definition to the inner model $V^{\Mbb(\eta) \rest (R_{>i(\alpha)} \cap \alpha)}$ obtained by taking generics at only those stages $\beta < \alpha$ with index $i(\beta) > i(\alpha)$.  This complicates the analysis as, for example, the forcing at stage $\alpha$ need not be $\mathord<\alpha$-closed, and so the standard iteration arguments break down.  Our strategy will be to embed $\Mbb(\eta)$ into a more familiar and better-behaved forcing, a class product of ground-model set forcing, in such a way that the extension by $\Mbb(\eta)$ inherits many of the properties of the extension by the product. 

To come up with an appropriate ground-model forcing that will effectively absorb $\Mbb(\eta)$, we utilize recent work of the first author \cite{Reitz2018:CohenForcing}.  $\Mbb(\eta)$ is an example of a broad class of iterations in which forcing at stage $\alpha$ consists of Cohen forcing at $\alpha$ as defined in some inner model.  As it turns out, such an iteration can be embedded into a ground-model product by simply replacing the stage $\alpha$ forcing with the corresponding ground model forcing.

\begin{definition}\label{definition.generalizedcoheniteration}Suppose $\Pbb = \left<\Pbb_\alpha, \dot\Qbb_\alpha \mid \alpha \in R\right>$ is an iteration along a class $R$ of regular cardinals with either Easton support or full (set) support.  Then $\Pbb$ is a \emph{generalized Cohen iteration} provided, for each $\alpha \in R$,
\begin{enumerate}
    \item $\dot\Qbb_\alpha$ is a full $\Pbb_\alpha$-name for a partial order and $\Pbb_\alpha \forces \dot\Qbb_\alpha = \dot\Add(\check\alpha, \check{\lambda_\alpha})^{V^{\Rbb_\alpha}}$, where
    \item $\Rbb_\alpha$ is a (possibly trivial) complete suborder of $\Pbb_\alpha$
    
    \item $\displaystyle \Rbb_\alpha \forces \left|\check\Add(\check\alpha,1)\right| = \left|\dot\Add(\check\alpha, 1)^{V^{\Rbb_\alpha}}\right|$, \\ 
    $\displaystyle \Rbb_\alpha \forces \left|\check\Add(\check\alpha,\check{\lambda_\alpha})\right| = \left|\dot\Add(\check\alpha, \check{\lambda_\alpha})^{V^{\Rbb_\alpha}}\right|$, and \\
    $\displaystyle \Rbb_\alpha \forces \check{V}\subseteq V^{\Rbb_\alpha}$ satisfies the $\alpha$-cover property for subsets of $\lambda_\alpha$ (that is, any $A\subset \lambda_\alpha$ from $V^{\Rbb_\alpha}$ of size $\mathord<\alpha$ in that model is contained in some $B$ from $V$ of size $\mathord<\alpha$).
\end{enumerate}
\end{definition}

\begin{theorem}\label{thm.generalizedcoheniterations}Suppose $\Pbb$ is a generalized Cohen iteration.  Then there is a projection map $\pi:\Pi_{\alpha\in R} \Add(\alpha,\lambda_\alpha)\to\Pbb$, where  $\Pi_{\alpha\in R} \Add(\alpha,\lambda_\alpha)$ is the ground model class product (with the same supports as $\Pbb$).  Furthermore,
\begin{enumerate}
    \item $\pi$ restricted to an initial segment of $R$ maps to the corresponding initial segment of $\Pbb$, that is 
    $\pi\rest\delta : \Pi_{\alpha \in R\cap\delta} \Add(\alpha,\lambda_\alpha) \to \left<\Pbb_\alpha, \dot\Qbb_\alpha \mid \alpha \in R\cap\delta\right>$ is a projection map, and for $p\in\Pbb$ we have $\pi(p\rest\delta)=\pi(p)\rest\delta$,
    \item for each $\alpha \in R$ we can factor $\Pbb \cong \Pbb_\alpha * \Pbb^\tail$ where $\Pbb^\tail$ is a $\Pbb_\alpha$-name for the tail forcing and $\Pbb_\alpha \forces \Pbb^\tail$ is $\mathord<\alpha$-distributive.
\end{enumerate}
\end{theorem}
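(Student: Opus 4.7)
The plan is to construct $\pi$ by recursion along $R$, using at each stage $\alpha$ a local projection $\sigma_\alpha : \Add(\alpha,\lambda_\alpha)^V \to \Add(\alpha,\lambda_\alpha)^{V^{\Rbb_\alpha}}$ from the ground-model Cohen forcing onto its inner-model version. The existence of this local projection is the key technical input from the first author's \cite{Reitz2018:CohenForcing}: clauses (2) and (3) of Definition \ref{definition.generalizedcoheniteration}---the complete suborder $\Rbb_\alpha$, the size-preservation equalities, and the $\alpha$-cover property---are precisely the hypotheses that guarantee such a $\sigma_\alpha$ exists in $V^{\Rbb_\alpha}$.

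Given $\pi\rest\delta$ already defined as a projection $\Pi_{\alpha\in R\cap\delta}\Add(\alpha,\lambda_\alpha)\to \Pbb_\delta$, I would extend at the next coding point $\alpha$ by letting $\pi(p)(\alpha)$ be the canonical $\Pbb_\alpha$-name for $\sigma_\alpha(p(\alpha))$, which makes sense inside $V^{\Rbb_\alpha}\subseteq V^{\Pbb_\alpha}$. Clause (1) of the conclusion holds immediately by construction, and limit stages are transparent because the ground-model product and $\Pbb$ are assumed to share (Easton or full) supports. The projection property then reduces to (a) order-preservation, coordinate-wise immediate from order-preservation of each $\sigma_\alpha$, and (b) the weak-below property: given $q\leq\pi(p)$ in $\Pbb$, producing $p'\leq p$ in the product with $\pi(p')\leq q$. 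I would prove (b) by induction along $R$, applying the weak-below property of $\sigma_\alpha$ at each coordinate after having already refined the earlier coordinates.

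For clause (2), I would argue $\mathord<\alpha$-distributivity of $\Pbb^{\tail}$ directly: each factor $\dot\Qbb_\beta$ for $\beta\geq\alpha$ is forced to be Cohen forcing at $\beta$ as computed in $V^{\Rbb_\beta}$, hence $\mathord<\beta$-closed there; the $\alpha$-cover clause ensures that $\mathord<\alpha$-sequences of such conditions arising in $V^{\Pbb_\beta}$ admit lower bounds already inside $V^{\Rbb_\beta}$, after which the standard iteration lemma for $\mathord<\alpha$-closed factors with the prescribed support yields the desired distributivity of the full tail. The main obstacle is the weak-below clause of the projection property: at each coordinate one must lift a refinement of $\sigma_\alpha(p(\alpha))$ living in $V^{\Pbb_\alpha}$ back to a ground-model Cohen condition, and this is precisely where the size-preservation and $\alpha$-cover hypotheses are indispensable, and where the machinery of \cite{Reitz2018:CohenForcing} does the heavy lifting.
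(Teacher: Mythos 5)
Your construction of $\pi$ is essentially the paper's: the stage-wise projection $\sigma_\alpha$ from the ground-model poset $\Add(\alpha,\lambda_\alpha)$ to $\Add(\alpha,\lambda_\alpha)^{V^{\Rbb_\alpha}}$ is exactly Lemma \ref{lem.cohenforcingprojectionlemma}, whose hypotheses are clauses (2) and (3) of Definition \ref{definition.generalizedcoheniteration}, and combining the names for these local projections level by level (using fullness of the names $\dot\Qbb_\alpha$) is how \cite{Reitz2018:CohenForcing} builds $\pi$ and obtains clause (1). Two cautions: $\Add(\alpha,\lambda_\alpha)^{V^{\Rbb_\alpha}}$ is computed in a forcing extension of the ground model, not an inner model of it; and your coordinate-wise induction for the weak-below property glosses over the fact that the witness supplied by $\sigma_\alpha$ lives in $V^{\Rbb_\alpha}$ and depends on the generic, whereas $p'$ must be a single ground-model condition of the product---this is part of the technical work done in \cite{Reitz2018:CohenForcing}, to which the paper also defers.

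The genuine gap is your argument for clause (2). You propose to prove $\mathord<\alpha$-distributivity of the tail of $\Pbb$ directly, claiming each $\dot\Qbb_\beta$ is $\mathord<\beta$-closed in $V^{\Rbb_\beta}$ and that the $\alpha$-cover property gives lower bounds in $V^{\Rbb_\beta}$ for descending sequences of conditions arising in $V^{\Pbb_\beta}$, after which a standard iteration lemma applies. This is precisely the argument the paper warns breaks down: the stage forcing is defined in $V^{\Rbb_\beta}$ but iterated over $V^{\Pbb_\beta}$, and there it need not be $\mathord<\beta$-closed, or even countably closed. Concretely, the coordinates of $\Pbb_\beta$ not in $\Rbb_\beta$ add bounded subsets of $\beta$ absent from $V^{\Rbb_\beta}$; a descending sequence of conditions of $\Add(\beta,\lambda_\beta)^{V^{\Rbb_\beta}}$ whose union codes such a new set lies in $V^{\Pbb_\beta}$ and has no lower bound in the poset, since a lower bound would place the new set in $V^{\Rbb_\beta}$. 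The cover property cannot repair this: it covers small subsets of $\lambda_\alpha$ that lie in $V^{\Rbb_\alpha}$ by ground-model sets and says nothing about sequences from the larger model $V^{\Pbb_\beta}$; its role is inside Lemma \ref{lem.cohenforcingprojectionlemma}, not in restoring closure. The intended route to (2) uses the projection you just built: the tail of the ground-model product beyond $\alpha$ is a product (with the same supports) of $\mathord<\alpha$-closed ground-model Cohen posets, hence $\mathord<\alpha$-distributive and adds no new $\mathord<\alpha$-sequences; since by clause (1) the projection respects initial segments, any $\mathord<\alpha$-sequence added by $\Pbb$ is absorbed into the corresponding product extension and, by distributivity of the product tail, already appears in the extension by the part below $\alpha$, whence the tail of $\Pbb$ beyond $\Pbb_\alpha$ adds no new $\mathord<\alpha$-sequences and is therefore $\mathord<\alpha$-distributive. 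You should replace your direct closure argument with this transfer argument.
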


Projection maps are standard notion \cite{Jech:SetTheory3rdEdition},  dual to that of complete embeddings, which allows a generic subset of the domain to generate a generic subset of the target.

\begin{proof}
A complete proof of the theorem appears in \cite{Reitz2018:CohenForcing}.  Here, we provide a brief sketch of the method.  The projection map is defined inductively level-by-level through repeated application of the following: 

\begin{lemma}\label{lem.cohenforcingprojectionlemma}
If $\alpha$ is a regular cardinal in $V$, $\lambda$ is a cardinal in $V$, and $W\subset V$ is an inner model satisfying:
\begin{enumerate}
    \item $|\Add(\alpha,1)^W|=|\Add(\alpha,1)^V|$, 
    \item $|\Add(\alpha,\lambda)^W|=|\Add(\alpha,\lambda)^V|$, and 
    \item $W\subset V$ satisfies the $\alpha$-cover property for subsets of $\lambda$,
\end{enumerate}
then there is a projection map $\pi:\Add(\kappa,\lambda)^W\to \Add(\kappa,\lambda)^V$.
\end{lemma}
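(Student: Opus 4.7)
The plan is to construct the projection $\pi : \Add(\alpha,\lambda)^W \to \Add(\alpha,\lambda)^V$ working inside $V$, using the $\alpha$-cover property to reduce the problem to controllable $W$-small sub-forcings and using the cardinality hypotheses to set up a coding. The naive identity map is not a projection: although every $W$-condition is automatically a $V$-condition (as $\alpha$ is regular in $V$, so $W$-size $<\alpha$ implies $V$-size $<\alpha$), a $V$-refinement $q \le p$ of a $W$-condition $p$ typically fails to lie in $W$, and no $W$-condition can extend $q$ as a function without recovering $q$ itself. A genuine coding scheme is therefore required.

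For each $V$-small subset $A \subseteq \lambda$ I would use hypothesis~(3) to fix a $W$-cover $B_A \in W$ with $A \subseteq B_A$ and $|B_A|^W < \alpha$, choosing these covers coherently along a fixed well-ordering. For each such cover $B$, the restricted forcings $\Add(\alpha, B)^W$ and $\Add(\alpha, B)^V$ both have $V$-cardinality at most $(2^{<\alpha})^V = (2^{<\alpha})^W$ by hypothesis~(1), and hypothesis~(2) further constrains the total sizes so that the two can be matched in $V$. This matching gives the local data for a coding between $W$- and $V$-conditions supported in the small region $\alpha \times B$.

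Next, I would define $\pi$ on each $p \in \Add(\alpha,\lambda)^W$ by taking the canonical cover $B$ of $\supp(p)$ and sending $p$ to the $V$-condition assigned to it by the local coding at $B$. To verify the projection property, given $q \le \pi(p)$ in $\Add(\alpha,\lambda)^V$, I would take a joint cover $B' \in W$ of $\supp(p) \cup \supp(q)$ and use the local coding at $B'$ to locate a refinement $p' \le p$ in $\Add(\alpha, B')^W$ whose image under $\pi$ refines $q$. Coherence across nested covers $B \subseteq B'$ ensures this works globally.

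The main obstacle is precisely this coherence of the local codings across nested covers, without which $\pi$ would fail to be order-preserving. Independently chosen bijections at each $B$ would not suffice; the codings must be built by a transfinite recursion along a well-ordering of the covers, extending the partial data at each stage in a way that respects both the extension order in the Cohen forcings and the refinement relation among covers. The cardinality hypotheses~(1) and~(2) provide the room needed to perform each extension, while the $\alpha$-cover property supplies the $W$-covers needed to bring arbitrary $V$-conditions inside the coded image of some $W$-sub-forcing. Managing this combinatorial bookkeeping is the technical heart of the proof.
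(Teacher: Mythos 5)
Your proposal identifies the right raw ingredients (the inclusion map is not a projection; the cardinality hypotheses and the cover property must feed some coding of $V$-conditions by $W$-conditions), but it stops exactly where the lemma begins. Note first that this paper does not prove the lemma at all---it is imported from \cite{Reitz2018:CohenForcing}---so your argument has to stand on its own, and as written it does not: the object you call ``the local coding at $B$'' is never defined, and the coherence of these codings across covers, which you yourself label ``the technical heart,'' is precisely the content of the lemma and is nowhere supplied. Moreover the architecture you propose does not reduce the difficulty. For a fixed cover $B$ with $|B|^W<\alpha$, a mere bijection between $\Add(\alpha,B)^W$ and $\Add(\alpha,B)^V$ carries no order information; an order isomorphism is impossible in general, since if $V$ has a new bounded subset of $\alpha$ then $\Add(\alpha,B)^W$ fails to be $\mathord<\alpha$-closed in $V$ (an increasing chain of $W$-conditions whose union codes a new set has no lower bound in the $W$-poset), while $\Add(\alpha,B)^V$ is $\mathord<\alpha$-closed; and what your verification of the projection property actually requires of the coding at the joint cover $B'$ is that it be a projection with dense images of cones---that is, the lemma itself for index set $B'$. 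So the plan reduces the lemma to a coherent family of instances of itself, and the coherence problem is an extra burden, not a tool: the constraints run along the inclusion order of $([\lambda]^{<\alpha})^W$, a directed partial order, not along your chosen well-order, so a recursion ``along a well-ordering of the covers'' must agree with already-treated covers lying both above and below the current one, and nothing in hypotheses (1)--(2) is shown to provide the asserted ``room'' while preserving order-preservation and density of cone images. Even the preliminary monotone choice of covers $A\mapsto B_A$ with $A\subseteq A'$ giving $B_A\subseteq B_{A'}$ is not justified by the cover property.

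What is missing is a concrete mechanism, and the hypotheses are tailored for a single global decoding rather than a coherent system of local matchings. The standard device for results of this kind is to let each $W$-condition carry, in a designated and recognizable ``coding'' region (possible because every condition is bounded below some $\gamma<\alpha$ in the $\alpha$-direction), blocks that are themselves elements of $\Add(\alpha,\lambda)^W$, and to let $\pi(p)$ be the union of the $\Qbb$-conditions named by the completed blocks of $p$ under a fixed $V$-surjection from $\Add(\alpha,\lambda)^W$ onto $\Add(\alpha,\lambda)^V$ (this surjection exists by hypothesis (2), with hypothesis (1) handling the single-coordinate count). Order preservation is then arranged by the block format (strengthening $p$ only adds completed blocks), and the projection property becomes: below any $p$, one can write inside the $W$-poset a block naming any prescribed $q\le\pi(p)$---this is where (1)--(2) (enough $W$-blocks to name every $V$-condition) and (3) (capturing $\supp(q)$ by a $W$-set of size $\mathord<\alpha$, so the naming block is a legitimate $W$-condition) actually do work. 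Your proposal gestures at these ingredients but defers the construction in which they are used, so it is a plan for a proof rather than a proof.
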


According to the lemma, we have a projection map from the ground model forcing $\Add(\alpha,\lambda_\alpha)^V$ to the forcing $\Add(\alpha,\lambda)^{V^{\Rbb_\alpha}}$ at stage $\alpha$ provided certain  restrictions are met---in particular, provided the posets $\Add(\alpha,1)$ and $\Add(\alpha,\lambda)$ of the extension $V^{\Rbb_\alpha}$ have the same sizes as those of the ground model, and the models $V\subseteq V^{\Rbb_\alpha}$ satisfy the $\alpha$-cover property for subsets of $\lambda_\alpha$.  Use of full names in the  iteration allows names for projection maps at each stage to be combined into a projection map in $V$ from the product of ground model posets
$\Add(\alpha,\lambda_\alpha)$ to $\Pbb$.  Note that standard analysis of the product shows that the tail forcing (the stages beyond $\alpha$) is $\mathord<\alpha$-distributive and so adds no $\mathord<\alpha$-sequences over the ground model.  The level-by-level nature of the projection shows that tail of $\Pbb$ (beyond $\Pbb_\alpha$) therefore cannot add $\mathord<\alpha$-sequences over the ground model, and so must also be $\mathord<\alpha$-distributive.  This establishes that $\Pbb$ is a progressively distributive iteration.
\end{proof}

We next consider the application of Theorem \ref{thm.generalizedcoheniterations} to $\Mbb(\eta)$.  Unfortunately, $\Mbb(\eta)$ does not quite have the form of a generalized Cohen iteration (forcing at stage $\alpha$ is not simply Cohen forcing but the lottery sum of two Cohen partial orders).  However, $\Mbb(\eta)$ is densely equal to a generalized Cohen iteration.  In particular, since full set support is used, for each $\alpha$ there is a dense set of conditions that decide which selection the lottery will take at every stage below $\alpha$.  We can make this observation more concrete by factoring $\Mbb(\eta)$ as $\Mbb(\eta)=\Add(R,1)*\Mbb(\eta)_{\dot A}$.  By the first factor $\Add(R,1)$ we mean the forcing with conditions $p:R\cap \alpha \to 2$ for each $\alpha$, ordered by extension, which adds a generic Cohen class $A\subset R$.  Since full set support is used, this is equivalent to forcing with $2^{<\Ord}$, which is $\alpha$-closed for every $\alpha$ and therefore does not add any sets.  The class $A$ determines which option the lottery takes at each stage.  The second factor, $\Mbb(\eta)_{\dot A}$, is an $\Add(R,1)$-name for the $\Ord$-length iteration of Cohen forcing where the particular Cohen poset used at stage $\alpha$ is determined by whether $\alpha\in A$, that is, by the lottery choice made at $\alpha$.
In $V[A]$ the name $\Mbb(\eta)_{\dot A}$ is resolved into an actual iteration $\Mbb(\eta)_A$.  It remains to show that this iteration is a generalized Cohen iteration (Definition \ref{definition.generalizedcoheniteration}).

Fix $\alpha \in R$, and let $\Rbb_\alpha = \Mbb(\eta)_A \rest (R_{>i(\alpha)} \cap \alpha)$.  The spacing between elements of $R$ allows us to establish inductively that the forcing $\Mbb(\eta)_A\rest\alpha$ up to $\alpha$ has size and chain condition strictly $<\delta$, where $\alpha=\delta^+$. It follows that $\alpha$ remains a cardinal in $V^{\Mbb(\eta)_A\rest\alpha}$, and that the sizes of the posets $\Add(\alpha,1)$ and $\Add(\alpha,(2^{<\alpha})^{++})$  are the same as they were in $V$.  As $V^{\Rbb_\alpha}$ is an intermediate model between $V$ and $V^{\Mbb(\eta)_A\rest \alpha}$, we conclude
\begin{align*}
&\Rbb_\alpha \forces \card{\check\Add(\check\alpha,1)} = \card{\dot\Add(\check\alpha, 1)^{V^{\Rbb_\alpha}}},\\
 &\Rbb_\alpha \forces \left|\check\Add(\check\alpha,\check{(2^{<\alpha})^{++}})\right| = \left|\dot\Add(\check\alpha,\check{(2^{<\alpha})^{++}})^{V^{\Rbb_\alpha}}\right|, \mand\\
 & \Rbb_\alpha \forces \check{V}\subseteq V^{\Rbb_\alpha} \text{ satisfies the } \alpha\text{-cover property for subsets of }\check{(2^{<\alpha})^{++}}.
\end{align*}
Note that the cover property in the final line follows from the chain condition of $\Rbb_\alpha$, a complete suborder of $\Mbb(\eta)_A\rest\alpha$.  Thus, in $V[A]$, we have established that $\Mbb(\eta)_A$ satisfies the hypotheses of Theorem \ref{thm.generalizedcoheniterations}.

Moreover note that the same argument applies to $\Mbb(\eta)_A \rest R_{\ge i}$, the restriction of $\Mbb(\eta)_A$ to only those coordinates $\alpha$ with index $\ge i$. So $\Mbb(\eta)_A \rest R_{\ge i}$ also satisfies the conditions for Theorem \ref{thm.generalizedcoheniterations}.

Many traditional iterations have a factoring property in which the tail forcing can be made to satisfy an arbitrary degree of closure by choosing an appropriate initial factor (call these \emph{progressively closed iterations} as in \cite{Reitz2006:Dissertation}).  Weakening this property from closure to distributivity in the tails, as in the case of Theorem \ref{thm.generalizedcoheniterations}, still suffices to preserve $\ZFC$---let us refer to such constructions as \emph{progressively distributive iterations}.

\begin{definition}\label{def:progressively-distributive-iteration}
$\Pbb$ is a \emph{progressively distributive iteration} if and only if for arbitrarily large regular $\alpha$ we can factor $\Pbb = \Pbb_\alpha * \dot{\Pbb}^{\tail}$ where $\Pbb_\alpha$ is a set and $\Pbb_\alpha \forces \dot{\Pbb}^{\tail}$ is $\mathord<\alpha$-distributive.
\end{definition}

\begin{lemma}\label{lem:progressively-distributive-iterations-are-tame}
If $\Pbb$ is a progressively distributive iteration then $\Pbb$ preserves $\ZFC$.
\end{lemma}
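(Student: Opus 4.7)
The plan is to adapt the standard proof that progressively closed iterations preserve $\ZFC$ \cite{Reitz2006:Dissertation} to the weaker distributive-tail setting.

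First I would establish the key reduction: every set $a \in V[G]$ lies in $V[G_\alpha]$ for some $\alpha$ witnessing progressive distributivity. Any $\Pbb$-name $\tau \in V$ is itself a set and so mentions only set-many conditions of $\Pbb$; because $\Pbb_\alpha$ is a set factor for arbitrarily large $\alpha$, these conditions all lie in some $\Pbb_\beta$, and choosing $\alpha \ge \beta$ in the progressivity set makes $\tau$ a $\Pbb_\alpha$-name (after trivial modification), whence $\tau^G \in V[G_\alpha]$. From this reduction, extensionality, foundation, pairing, union, infinity, and choice transfer immediately from each $V[G_\alpha]$, which satisfies $\ZFC$ as a set-forcing extension: any finite configuration of witnesses is captured in a single intermediate model.

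For Power Set, given $x \in V[G]$ I would pick $\alpha$ in the progressivity set with $x \in V[G_\alpha]$, $\card{x}^{V[G_\alpha]} < \alpha$, and $\dot\Pbb^{\tail}$ being $\mathord<\alpha$-distributive over $V[G_\alpha]$. Fixing an enumeration of $x$ inside $V[G_\alpha]$, any $y \subseteq x$ in $V[G]$ corresponds to a subset of the ordinal $\card{x} < \alpha$; tail distributivity forces this subset to already lie in $V[G_\alpha]$, so $\Pcal(x)^{V[G]} = \Pcal(x)^{V[G_\alpha]}$ is a set.

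For Separation and Replacement, whose defining formulas may quantify over all of $V[G]$, the plan is to show that $\Pbb$ is pretame: for any $p \in \Pbb$ and any set-indexed family $\langle D_i : i \in I \rangle$ of dense subclasses below $p$, there are $q \le p$ and set-sized $d_i \subseteq D_i$ predense below $q$. I would take $\alpha$ in the progressivity set with $p \in \Pbb_\alpha$, $\card{I} < \alpha$, and $\mathord<\alpha$-distributive tail, project each $D_i$ to a dense subclass of the set $\Pbb_\alpha$, extract set-sized predense antichains using choice in $V$, and lift back into the $D_i$. Pretameness then yields a definable forcing relation for each formula, from which Separation and Replacement in $V[G]$ follow by applying the corresponding axioms in $V$ to the defining forcing-relation formulas.

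The main obstacle is the lifting step in pretameness: verifying that set-sized predense antichains in the set factor $\Pbb_\alpha$ genuinely lift to predense sets in $\Pbb$ when the tail is only distributive rather than closed. The argument must exploit that $\mathord<\alpha$-distributivity over $V[G_\alpha]$ prevents the tail from introducing new antichain structure of size $<\alpha$ that could split compatibilities present in the projected picture; without closure, one cannot simply concatenate decreasing tail extensions, so the distributivity is used more delicately to rule out such pathologies.
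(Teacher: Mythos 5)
Your Power Set argument is essentially the paper's, but the Separation/Replacement half --- exactly the part the paper identifies as the one needing discussion --- is not actually proved. You route it through pretameness and then explicitly leave the decisive step open: the ``main obstacle'' of lifting set-sized predense sets from the factor $\Pbb_\alpha$ back to predense sets in $\Pbb$ is acknowledged but only met with the hope that distributivity ``rules out such pathologies.'' That hope is not a proof, and the plan as stated does not go through: a dense subclass $D_i$ of $\Pbb$ does not project to a dense subset of $\Pbb_\alpha$ in any way that lets antichains extracted inside $\Pbb_\alpha$ be lifted back into $D_i$, and ``the tail adds no new antichain structure'' is not what $\mathord<\alpha$-distributivity says. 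What distributivity actually buys --- and what the paper uses directly, with no pretameness machinery --- is that the intersection of fewer than $\alpha$ many dense open subclasses of the tail forcing is dense. For Replacement the paper fixes $a$ and a name $\dot F$ for a class function, factors so that the tail is $\mathord\le\card{a}$-distributive, notes that for each $x\in a$ the conditions deciding $\dot F(\check x)$ are dense in the tail, intersects these $\card{a}$ many dense classes to find a single condition $q\in G$ deciding all of $\dot F\rest a$, and reads off a set name for $F''a$ below $q$. If you want to keep the pretameness route (which is viable in principle --- the paper itself assumes pretameness of these iterations elsewhere), this same intersect-the-induced-dense-tail-classes argument is what must replace your projection-and-lift scheme; as written, the crucial content is missing.

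A secondary problem is your opening reduction. You argue that every set of $V[G]$ lies in some $V[G_\alpha]$ because a set-sized name mentions only set-many conditions, all lying in some $\Pbb_\beta$. Definition \ref{def:progressively-distributive-iteration} gives only that for arbitrarily large $\alpha$ there is a factorization $\Pbb=\Pbb_\alpha*\dot\Pbb^{\tail}$ with $\Pbb_\alpha$ a set; it does not say the set factors are increasing, exhaust $\Pbb$, or contain the conditions appearing in an arbitrary name, so the boundedness claim has no basis at this level of generality (it happens to hold for the specific forcings $\Mbb(\eta)$, $\Nbb(\eta)$, but the lemma is stated abstractly). The correct way to capture $a$ and its subsets in $V[G_\alpha]$, and the way the paper does it, is again via the tail's $\mathord<\alpha$-distributivity: choosing $\alpha$ above the (hereditary) size of $a$, code the relevant sets by sets of ordinals below $\alpha$ and observe that the distributive tail adds no such sets over $V[G_\alpha]$.
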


\begin{proof}
Showing that progressively distributive iterations preserve $\ZFC$ is a straightforward modification of the proof for progressively closed iterations \cite{Reitz2006:Dissertation}, with powerset and replacement the only axioms requiring discussion.  To see that the extension $V[G]$ by a progressively distributive iteration $\Pbb$ preserves powerset, we fix a set $a \in V[G]$ and, choosing appropriate $\alpha > \card{a}$, we factor $\Pbb = \Pbb_\alpha * \dot{\Pbb}^{\tail}$.  Distributivity of the tail forcing shows that both $a$ and all subsets of $a$ from $V[G]$ lie in the set forcing extension $V[G_\alpha]$ by $\Pbb_\alpha$, which is a model of $\ZFC$ and so contains the powerset $\powerset(a)$.  Thus $\powerset(a) \in V[G]$.  

For the replacement axiom, fix a name $\dot F$ for a class function in the extension and fix a set $a$ in the extension. By progressively closed distributivity we can factor $\Pbb$ as above so that the tail forcing is $\mathord\le \card{a}$-distributive. Now note that for each $x \in a$ the class of conditions deciding the value of $\dot F(\check x)$ is dense in the tail forcing. So by $\mathord\le \card{a}$-distributivity of $\Pbb^\tail$ we can obtain a single dense set of conditions which decide all values of $F \rest a$. Pick such condition $q \in G$ and consider the name $\dot b = \{ (q, \dot y) : q \forces_{\Pbb^\tail} \dot F(\check x) = \dot y$ for some $x \in a \}$. Then $\dot b$ is a set name and gives $F''a$ when interpreted by the generic, so $F''a$ is a set in $V[G]$, as desired. Thus progressively distributive iterations preserve $\ZFC$.
\end{proof}


For the convenience of the reader, we summarize here some properties about $\Mbb(\eta)$. It is $\mathord\le\eta^+$-closed. Forcing with $\Mbb(\eta)$ preserves $R$. For $\alpha \in R$ it factors into $\Pbb_{<\alpha} * \dot\Qbb_\alpha * \dot\Pbb^\tail$ where $\Pbb_{<\alpha}$ has cardinality $<\alpha$ and $\Pbb_{<\alpha} * \dot\Qbb_\alpha$ forces that $\dot\Pbb^\tail$ is $\mathord<\alpha$-distributive. And analogous facts hold for $\Mbb(\eta) \rest R_{\ge i}$ for each $i \le \eta$, the restriction of $\Mbb(\eta)$ to only those coordinates with index $\ge i$.\footnote{To clarify, $\Mbb(\eta) \rest R_{\ge \eta}$ is empty, since every condition has index $< \eta$. What we mean in this case is the trivial forcing $\{ \one_{\Mbb(\eta)} \}$. Cf.\ the remarks at the beginning of the proof of Theorem \ref{thm:main-thm} about canonically embedding $\Mbb(\eta) \rest R_{\ge i}$ into $\Mbb(\eta)$.}

\section{Forcing the ground model to be an inner mantle} \label{sec:main-theorem}

In this section we prove the first main theorem of this article, that forcing with $\Mbb(\eta)$ makes the ground model the $\eta$-th inner mantle and $\eta$-th iterated $\HOD$ of the extension. We start by considering the sequence of inner mantles.

\begin{theorem} \label{thm:main-thm}
Let $\eta$ be an ordinal and $G$ be generic over $V$ for $\Mbb(\eta)$, the class forcing notion of Definition \ref{def:main-def}. Then in $V[G]$ we have that the $\eta$-th inner mantle $M^\eta$ is $V$. 
\end{theorem}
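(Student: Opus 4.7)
The plan is to prove by induction on $i \le \eta$ that the $i$-th inner mantle of $V[G]$ exists and equals $M_i := V[G \rest R_{\ge i}]$; taking $i = \eta$ then gives $M^\eta = V[G\rest\emptyset] = V$. Along the way I verify $M_i \supsetneq M_{i+1}$ (detectable in $V[G]$ via the GCH pattern at $R_i$), showing the sequence does not stabilize before $\eta$. Write $G_i = G \rest R_{\ge i}$; the base case $M^0 = M_0 = V[G]$ is immediate.

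For the successor step $i \mapsto i + 1$, I must show that the mantle of $M_i$ is $M_{i+1}$, in both directions. For the inclusion $M_{i+1} \subseteq$ mantle of $M_i$, I adapt the argument of \cite{FuchsHamkinsReitz2015:Set-theoreticGeology}. Viewed from inside $M_{i+1}$, the extension $M_i$ arises by a class iteration along $R_i$ of GCH-coding lotteries, each stage definable in an initial segment of $M_{i+1}$. The class $R_i$ is uniformly definable from $\eta$ in any inner model containing $\eta$, and the GCH pattern at $\alpha \in R_i$ in $M_i$ records the lottery selection. Given any set-forcing ground $W$ of $M_i$, say $M_i = W[H]$ with $H$ generic for some $\Pbb \in W$ of size $<\kappa$, for $\alpha \in R_i$ above an appropriate bound in $\kappa$ the GCH status at $\alpha$ in $W$ must agree with that of $M_i$, so $W$ reads off the lottery selections on a tail of $R_i$. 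A capturing argument, using the absorption of $\Mbb(\eta)$ into a ground-model Cohen product from Theorem \ref{thm.generalizedcoheniterations} to make the relevant chain-condition and cover computations transparent, then yields $M_{i+1} \subseteq W$. For the reverse inclusion, I observe that for each $\alpha \in R_{\ge i}$ the model $V[G \rest (R_{\ge i} \setminus \{\alpha\})]$ is a set-forcing ground of $M_i$, since the stage-$\alpha$ forcing is set-sized over this submodel. Any $x$ in the mantle of $M_i$ therefore lies in $V[G \rest (R_{\ge i} \setminus \{\alpha\})]$ for every $\alpha \in R_i$; a standard homogeneity/density argument shows such an $x$ cannot depend on any single coordinate $G_\alpha$ with $\alpha \in R_i$, and hence $x \in V[G \rest R_{>i}] = M_{i+1}$.

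For the limit case at $\lambda \le \eta$, I must show $\bigcap_{i < \lambda} M_i = M_\lambda$. The inclusion $M_\lambda \subseteq \bigcap_{i<\lambda} M_i$ is immediate from $R_{\ge \lambda} \subseteq R_{\ge i}$. For the reverse, a set $x$ lying in every $M_i$ has, for each $i < \lambda$, a $\Mbb(\eta)\rest R_{\ge i}$-name in $V$; using the progressive distributivity (Lemma \ref{lem:progressively-distributive-iterations-are-tame}) and the product absorption, these names can be combined into a single $\Mbb(\eta)\rest R_{\ge \lambda}$-name, witnessing $x \in M_\lambda$.

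The main obstacle will be the successor step's FHR-style capturing argument, precisely because the stage-$\alpha$ forcing in $\Mbb(\eta)$ is defined in the inner model $V^{\Mbb(\eta)\rest(R_{>i(\alpha)}\cap\alpha)}$ rather than in the full prior extension, so one cannot directly invoke standard iteration facts. Theorem \ref{thm.generalizedcoheniterations} earns its keep here by transporting the situation to a ground-model product of Cohen posets, in which the FHR argument transfers through essentially verbatim.
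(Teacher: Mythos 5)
Your overall architecture matches the paper's: induct on $i\le\eta$ to show the $i$-th inner mantle of $V[G]$ is $V[G\rest R_{\ge i}]$, with the successor step reduced, via the absorption of $\Mbb(\eta)$ into a ground-model product (Theorem \ref{thm.generalizedcoheniterations}), to the Fuchs--Hamkins--Reitz analysis of the GCH-coding product over $V[G_{i+1}]$. That part of your sketch is essentially the paper's argument (one quibble: for the inclusion of the mantle of $M_i$ into $M_{i+1}$, the operative tool is not homogeneity but the increasing distributivity of the tails of the product plus mutual genericity --- any set of $M_i$ is captured by a set-sized initial factor $\Qbb_\alpha$, and the tail extension $V[G_{i+1}][H^\alpha]$ is a ground omitting it unless it already lies in $V[G_{i+1}]$; your ``remove one coordinate at a time and invoke homogeneity'' step conflates this with the HOD argument and, as stated, does not justify passing from ``independent of each single coordinate'' to ``in the model with all $R_i$-coordinates removed'').

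The genuine gap is the limit stage. You assert that if $x$ has an $\Mbb(\eta)\rest R_{\ge j}$-name for every $j<\lambda$ then ``these names can be combined into a single $\Mbb(\eta)\rest R_{\ge\lambda}$-name,'' citing progressive distributivity and absorption --- but neither of those gives any mechanism for combining names across a descending $\lambda$-sequence of suborders, and the statement is false for general descending chains: showing $\bigcap_{j<\lambda}V[G_j]=V[G_\lambda]$ is exactly the delicate point of the whole construction. Note that your argument never uses the $\mathord\le\eta^+$-closure of $\Mbb(\eta)$, which is the reason the coding points were placed above $\eta^+$ in the first place; without that closure the limit equality can fail. The paper handles this with Jech's lemma (Lemma \ref{lem:jechs-limit-lemma}): for a $\mathord<i^+$-distributive \emph{complete Boolean algebra} and a continuous descending sequence of complete subalgebras, a set of ordinals lying in every intermediate extension lies in the extension by the intersection. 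Since proper class forcings need not have Boolean completions, one cannot apply this directly to the $\Pbb_j$; the paper's Lemma \ref{lem:limit-lemma-Vkappa-version} uses progressive distributivity to localize to a set-sized factor $\Qbb_\kappa$ with $\kappa>\rank(X)$, checks that the intersections $\Pbb_j\cap\Qbb_\kappa$ again form a continuous descending chain of complete suborders, and applies Jech's lemma to their Boolean completions. Your proposal needs this (or an equivalent) argument spelled out; as written, the limit case is not proved.
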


\begin{proof} For notational convenience, set $\Pbb = \Mbb(\eta)$. For $i < \eta$ let $\Pbb_i = \Pbb \rest R_{\ge i}$. 
We can identify $\Pbb_i$ with its canonical embedding into $\Pbb$, namely by putting $\one$ in each new coordinate. Under this identification
\[
\Pbb = \Pbb_0 \supseteq \Pbb_1 \supseteq \cdots \supseteq \Pbb_i \supseteq \cdots \qquad i < \eta
\]
is a continuous \emph{descending} chain of class forcing notions.

\begin{claim} \label{claim:eta-star}
These inclusions are inclusions of complete suborders. 
\end{claim}

Let $i < j < \eta$. We want to see that every condition in $\Pbb_i$ has a reduction in $\Pbb_j$. This is straightforward: If $p \in \Pbb_i$ then consider $p \rest R_{\ge j} \in \Pbb_j$. Then any $q \le p \rest R_{\ge j}$ in $\Pbb_j$ is compatible with $p$ in $\Pbb_i$ as witnessed by strengthening $q$ on the new coordinates to agree with $p$.

It is in this sense that we can think of $\Pbb$ as an iteration along the non-well-founded order $\eta^\star$. The literature does not contain a general theory of iterations along arbitrary partial orders, although a number of specific instances have been investigated  \cite{GroszekJech1991:GeneralizedIterationofForcing, KellnerShelah2011:Saccharinity}.  Kanovei highlights the basic problem of defining iterations along non-well-founded orders, noting that the usual inductive definition fails in this context \cite{Kanovei1999:OnNonWellfoundedIterationsofthePerfectSetForcing}.  We have overcome this difficulty by defining the $\Pbb_i$ through alternative means, as certain suborders of a well-founded set forcing iteration of order type $\Ord$.  The important point is that we achieve the required features of an non-well-founded iteration along $\eta^\star$, namely, at successor stages $\Pbb_i \supseteq \Pbb_{i+1}$ we have that $\Pbb_i$ factors as $\Pbb_{i+1} * \dot \Qbb$ for an appropriate $\Qbb$. (Limit stages are handled separately; see Lemma~\ref{lem:limit-lemma-Vkappa-version} below.)

Working in $V[G]$, we will show that the mantle sequence up to and including $\eta$ is given exactly by the sequence of models $V[G_i]$, where $G_i=G\cap \Pbb_i$.  More precisely, 

\begin{claim}
For each $i \le \eta$ we have $(M^i)^{V[G]} = V[G_i]$.
\end{claim}

In particular, this claim immediately implies that $(M^i)^{V[G]}$, for $i \le \eta$, is a model of $\ZFC$, since $V[G_i]$ is an extension of $V$ by a progressively distributive iteration. It is not \textit{prima facie} clear that $M^i$ for limit $i$ should satisfy choice, and the definition of the mantle needs the axiom of choice. So as a consequence, this claim implies that it is always sensical to ask about the mantle of $M^i$ in this context.

Suppose inductively the claim is true for some $i < \eta$, so $(M^i)^{V[G]} = V[G_i]$.  We must show that the mantle of $V[G_i]$ is equal to $V[G_{i+1}]$.  
As $V[G_{i+1}]$ is the smaller model, we analyze $V[G_i]$ as a forcing extension of $V[G_{i+1}]$.  Consider the collection $\dot{\Qbb} = \{p\rest R_i : p\in\Pbb \}$.  It follows from the definition that  $\dot{\Qbb}$ is (equivalent to) a $\Pbb_{i+1}$-name for a partial order, and so $\Pbb_i = \Pbb_{i+1}*\dot{\Qbb}$.  Working in $V[G_{i+1}]$ and taking $\Qbb$ to be the partial order obtained by valuating $\dot{\Qbb}$ with $G_{i+1}$---technically, $\Qbb$ consists of functions  $q_p = \{\langle\alpha,p(\alpha)^{G_{i+1}}\rangle : \alpha \in \dom(p)\}$ for each $p\in \Pbb$---we see immediately that $\Qbb$ is a set-support product of partial orders over the class of cardinals $R_i$.  From the analysis of stage $\alpha$ forcing above, we see that $\Qbb$ is exactly the product 
\[
\Qbb = \Pi_{\alpha\in R_i} \left(\Add(\alpha,(2^{<\alpha})^{++}) \oplus \Add(\alpha^+,1)\right)
\]
as defined in $V[G_{i+1}]$. It remains to show that this forcing makes the mantle of the extension $M^{V[G_i]}$ equal to the ground model $V[G_{i+1}]$.  This argument is presented in detail in \cite[Theorem 66]{FuchsHamkinsReitz2015:Set-theoreticGeology}, and we provide a brief overview here.  

For the forward inclusion $M^{V[G_i]} \subseteq V[G_{i+1}]$, observe that any particular set $X$ added by $\Qbb$ will be added by some initial set-sized factor (by increasing closure of the tails of the product), and by the commutative property of products we might just as well view this factor as occurring after the tail forcing, so $X$ is added to $V[G_i]$ by set forcing over an inner model.  Thus every set added by $\Qbb$ is excluded from a ground, and hence from the mantle.  For the reverse inclusion $V[G_{i+1}] \subseteq M^{V[G_i]}$, it suffices to consider sets $X$ of ordinals in the model $V[G_{i+1}]$.  A density argument shows that $X$ will be coded into the $\GCH$ pattern on a block of cardinals in $R_i$ according to whether the generic opted for $\Add(\alpha,(2^{<\alpha})^{++})$ or $\Add(\alpha^+,1)$ at each stage in the block.  Furthermore, this coding will appear repeated arbitrarily high in the cardinals in $R_i$.  As set forcing cannot alter the $\GCH$ pattern above the size of the forcing, it follows that $X$ will remain coded in any ground model of $V[G_i]$. Thus the mantle of $V[G_i]$ is exactly $V[G_{i+1}]$.

We now turn our attention to the limit case, supposing inductively that the claim holds below some limit ordinal $i\leq\eta$.  Here we will follow the lead of Jech, who proved the following in his work on iterated $\HOD$.

\begin{lemma}\cite{Jech:ForcingWithTreesandOrdinalDefinability}\label{lem:jechs-limit-lemma} 
Suppose $i$ is a limit ordinal and $\Bbb$ is a $\mathord< i^+$-distributive complete Boolean algebra.  Let
\[
\Bbb = \Bbb_0 \supseteq \Bbb_1 \supseteq \cdots \supseteq \Bbb_j \supseteq \cdots \supseteq \Bbb_i \qquad j \leq i
\]
be a continuous descending sequence of complete subalgebras (i.e. $\Bbb_{j+1}$ is a complete suborder of $\Bbb_j$ and for limit $k$ we have $\Bbb_k = \bigcap_{j < k} \Bbb_j$) and let $G$ be $V$-generic for $\Bbb$, with $G_j = G\cap\Bbb_j$ for each $j \leq i$.  If $X$ is a set of ordinals and $X\in V[G_j]$ for all $j \leq i$, then $X \in V[G_i]$.
\end{lemma}

The proof relies heavily on the properties of complete Boolean algebras, and unfortunately does not transfer directly to the class forcing described in Definition \ref{def:main-def}.  The main obstruction is that proper class partial orders do not in general have Boolean completions \cite{HKS2018}.
However, the result can still be transferred to certain proper class partial orders, provided we impose some additional factoring conditions.

\begin{lemma}[Over G\"odel--Bernays set theory without global choice]\label{lem:limit-lemma-Vkappa-version}
Let $i$ be a limit ordinal. 
Suppose that $\Pbb$ is a $\mathord<i^+$-closed pretame class forcing notion and that
\[
\Pbb = \Pbb_0 \supseteq \Pbb_1 \supseteq \Pbb_2 \supseteq \cdots \supseteq \Pbb_j  \supseteq \cdots \supseteq \Pbb_i \qquad j \le i
\]
is a continuous descending sequence of complete suborders which is coded as a single class. Suppose further that $\Pbb$  is a progressively distributive iteration, so for arbitrarily large $\kappa$ we have $\Pbb = \Qbb_\kappa * \Qbb^\tail$ witnessing progressive distributivity. Finally, suppose that  $\Pbb_j \cap \Qbb_\kappa$ is a complete suborder of $\Pbb_j$ for each $j$, and that the intersections form a continuous descending sequence of complete suborders:
\[
(\Pbb\cap\Qbb_\kappa) = (\Pbb_0\cap\Qbb_\kappa) \supseteq (\Pbb_1\cap\Qbb_\kappa)  \supseteq \cdots \supseteq (\Pbb_j\cap\Qbb_\kappa)  \supseteq \cdots \supseteq (\Pbb_i\cap\Qbb_\kappa) \qquad j \le i.
\]
If $G \subseteq \Pbb$ is generic over the ground universe and $G_j = G \cap \Pbb_j$, then for any set of ordinals $X \in V[G]$ we have $X \in \bigcap_{j < i} V[G_j]$ if and only if $X \in V[G_i]$. 
\end{lemma}

\begin{proof}
Fix $X$ a set of ordinals in $V[G]$ and $\kappa > \rank{X}$ as in the lemma.  As $\Qbb_\kappa \forces \Qbb^\tail$ is $\leq\kappa$-distributive, we conclude that $V_\kappa^{V[G]}=V_\kappa^{V[G\cap\Qbb_\kappa]}$, so it suffices to show that $X \in \bigcap_{j < i} V[G_j \cap \Qbb_\kappa]$ iff $X \in V[G_i \cap \Qbb_\kappa]$.  But this follows immediately by applying Lemma \ref{lem:jechs-limit-lemma} to the sequence
\[
(\Pbb\cap\Qbb_\kappa) = (\Pbb_0\cap\Qbb_\kappa) \supseteq (\Pbb_1\cap\Qbb_\kappa)  \supseteq \cdots \supseteq (\Pbb_j\cap\Qbb_\kappa)  \supseteq \cdots \supseteq (\Pbb_i \cap \Qbb_\kappa) \qquad j \le i.
\]
More properly, the lemma is applied to the sequence of Boolean completions of these posets.
\end{proof}

For the limit case of the claim, we would like to apply Lemma \ref{lem:limit-lemma-Vkappa-version} to the sequence $\seq{\Pbb_j : j \leq i}$.  Note that this sequence is a continuous descending sequence of complete suborders and is $\mathord<i^+$ closed, as required.  To see that it is  progressively distributive, fix $\kappa$ and, viewing $\Pbb$ as an $\Ord$-length iteration of set forcing, let $\Qbb_\kappa = \{p\rest(\kappa+1) : p\in\Pbb\}$ be the part of $\Pbb$ lying at and below $\kappa$.  As argued above, the forcing beyond $\kappa$ is $\mathord\leq \kappa$-distributive.  The factoring conditions of Lemma \ref{lem:limit-lemma-Vkappa-version} follow from the observation that, for each $j < i$, we have $(\Pbb_j \cap \Qbb_\kappa) = \Pbb_j \rest (\kappa+1) = \Pbb \rest \left(R_{\ge j} \cap (\kappa+1) \right)$, and the restriction maps give complete embeddings.  Thus $\seq{\Pbb_j : j \leq i}$ satisfies the hypotheses of Lemma \ref{lem:limit-lemma-Vkappa-version}. And so we can conclude that $\bigcap_{j < i} V[G_j] = V[G_i]$.

Finally, observe  that in the case when $i=\eta$ we have $\bigcap_{j<\eta} \Pbb_j$ is trivial forcing, and so $(M^\eta)^{V[G]} = V[G_\eta] = V$ as desired.  This completes the proof of the claim, and of Theorem \ref{thm:main-thm}.
\end{proof} 

We now analyze the iterated $\HOD$ sequence\footnote{See Definition \ref{definition.iteratedHOD}.} 
in the extension by $\Mbb(\eta)$.

\begin{theorem} \label{thm:meta-iterated-hods}
Let $\eta$ be an ordinal and $G$ be generic over $V$ for $\Mbb(\eta)$, the class forcing notion of Definition \ref{def:main-def}. Then in $V[G]$ we have that the $\eta$-th iterated $\HOD$ is  $\HOD^\eta = V$. 
\end{theorem}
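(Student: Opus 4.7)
The plan is to adapt the inductive argument used in the proof of Theorem \ref{thm:main-thm}, showing by transfinite induction on $i \le \eta$ that $(\HOD^i)^{V[G]} = V[G_i]$. Taking $i = \eta$ then yields $\HOD^\eta = V[G_\eta] = V$. The base case $i=0$ is immediate from the definition, and the limit case goes through exactly as in the mantle proof: the inductive hypothesis identifies $\bigcap_{j < i} \HOD^j$ with $\bigcap_{j < i} V[G_j]$, and this intersection is $V[G_i]$ by Lemma \ref{lem:limit-lemma-Vkappa-version}.

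The main work lies in the successor step: given $\HOD^i = V[G_i]$, show $\HOD^{V[G_i]} = V[G_{i+1}]$. For the inclusion $V[G_{i+1}] \subseteq \HOD^{V[G_i]}$, I would reuse the coding argument from the successor step of Theorem \ref{thm:main-thm}: the generic for $\Mbb(\eta) \rest R_i$ codes every set of $V[G_{i+1}]$ into the GCH pattern at cofinally many blocks of cardinals in $R_i$, rendering such sets ordinal-definable in $V[G_i]$.

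The reverse inclusion $\HOD^{V[G_i]} \subseteq V[G_{i+1}]$ is the HOD-specific ingredient. Working over $V[G_{i+1}]$, I would factor the tail forcing as $\Qbb = \Mbb(\eta) \rest R_i \cong \Add(R_i,1) * \dot\Qbb_A$, mirroring the factoring of $\Mbb(\eta)$ described in Section \ref{sec:meta}. Here $A \subseteq R_i$ is the generic Cohen class recording, at each $\alpha \in R_i$, which of the two lottery summands $\Add(\alpha,(2^{<\alpha})^{++})$ and $\Add(\alpha^+,1)$ was chosen. Because $\Add(R_i,1)$ uses set support it is $\alpha$-closed for every $\alpha$ and therefore adds no sets, so $V[G_{i+1}][A]$ has exactly the same sets as $V[G_{i+1}]$, even though $A$ is a new proper class. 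Over $V[G_{i+1}][A]$ the forcing $\Qbb_A$ resolves into a set-support product of Cohen posets $\Add(\alpha,\lambda_\alpha)$ (with $\lambda_\alpha$ determined by $A(\alpha)$), which is weakly homogeneous. Weak homogeneity then guarantees that any set of $V[G_i] = V[G_{i+1}][A][G'_i]$ definable from ordinals already lies in $V[G_{i+1}][A]$, and hence, being a set, belongs to $V[G_{i+1}]$.

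The main obstacle I anticipate is carefully verifying the factoring and weak homogeneity in the class-forcing setting. In particular I would need to confirm that the factoring $\Qbb \cong \Add(R_i,1) * \dot\Qbb_A$ over $V[G_{i+1}]$ works as an adaptation of the corresponding factoring of the full $\Mbb(\eta)$ in Section \ref{sec:meta}, and that weak homogeneity of the class product $\Qbb_A$ is strong enough to push an ordinal-definable set of $V[G_i]$ back into $V[G_{i+1}][A]$. The delicate conceptual point is that the new class $A$ is visible as a parameter over $V[G_{i+1}]$ but does not enlarge the set-theoretic universe beyond it; this is precisely what lets the HOD sequence track the mantle sequence step for step.
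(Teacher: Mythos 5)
Your proposal is correct and shares the paper's overall architecture: the same induction on $i \le \eta$ showing $\HOD^i = V[G_i]$, the same limit case via Lemma \ref{lem:limit-lemma-Vkappa-version}, and the same coding argument for $V[G_{i+1}] \subseteq \HOD^{V[G_i]}$ (where, as the paper makes explicit, one also uses that $R_i$ itself remains ordinal definable in $V[G_i]$). Where you genuinely diverge is the reverse inclusion $\HOD^{V[G_i]} \subseteq V[G_{i+1}]$, the HOD-specific step. The paper localizes: since $\Qbb$ is a progressively closed product, any new set is added by a set-sized initial factor $\Qbb_\alpha$, which is not weakly homogeneous but is \emph{densely} weakly homogeneous (below a condition deciding every lottery in its support it is just a product of Cohen posets), and the standard set-forcing homogeneity fact then shows such a set cannot be ordinal definable in $V[G_i]$. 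You instead resolve the lotteries globally, factoring $\Qbb \cong \Add(R_i,1) * \dot{\Qbb}_A$ as in Section \ref{sec:meta}, noting that $\Add(R_i,1)$ adds no sets, and applying weak homogeneity directly to the class product $\Qbb_A$ over $(V[G_{i+1}],A)$. This works and buys a cleaner one-shot argument that avoids capturing $X$ in an initial factor (and the implicit intermediate-model intersection in the paper's sketch); what it costs is that the homogeneity argument must be run for a proper class forcing: you need the forcing relation for $\Qbb_A$ to be definable over $(V[G_{i+1}],A)$ (available here because $\Qbb_A$ is a progressively distributive, hence pretame, product), that coordinatewise automorphisms give weak homogeneity of the class product, and finally that the resulting bounded class of ordinals, definable from ordinals together with the class parameter $A$, is a set of $V[G_{i+1}]$ by class separation --- precisely the delicate points you flag. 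The paper's dense-homogeneity route is designed to sidestep this class-forcing machinery by keeping the homogeneity step at the level of set forcing; either way the theorem goes through.
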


\begin{proof}
We follow the proof of Theorem \ref{thm:main-thm}, defining for each $i<
\eta$ the partial order $\Pbb_i$ with corresponding generic $G_i$. 

\begin{claim}
Let $i \le \eta$. In $V[G]$ the $i$th iterated $\HOD$ is $\HOD^i = V[G_i]$
\end{claim}

This is proven inductively. The limit stage of the argument is the same as the limit stage of the argument for the sequence of inner mantles. For both sequences, the elements of the sequence at limit indices are defined as the intersection of the previous elements. So the same argument using Jech's lemma goes through in this case.

It remains to verify the successor stage of the induction. We follow the presentation in \cite[Theorem 66]{FuchsHamkinsReitz2015:Set-theoreticGeology} to show that $\HOD^{V[G_i]} = V[G_{i+1}]$.  By the proof of Theorem \ref{thm:main-thm}, $V[G_i]$ is a forcing extension of $V[G_{i+1}]$ by the product forcing
\[
\Qbb = \prod_{\alpha\in R_i} \left(\Add(\alpha,(2^{<\alpha})^{++}) \oplus \Add(\alpha^+,1)\right)
\]
and every set of ordinals $X$ in $V[G_{i+1}]$ is coded into the $\GCH$ pattern in $R_i$.  Furthermore, the coding is such that $X$ can be defined in $V[G_i]$ from $R_i$ together with the interval on which the coding takes place and, as $R_i$ remains ordinal definable in $V[G_i]$, $X$ is thus ordinal definable there.  It follows that in $V[G_i]$ we have that $\HOD \supseteq V[G_{i+1}]$.

We now want to see the other containment.  As $\Qbb$ is a progressively-closed product, any new set $X$ added by $\Qbb$ must be added by an initial segment $\Qbb_\alpha = \Qbb \rest \alpha$.  Factoring $\Qbb=\Qbb_\alpha \times \Qbb^\tail =\Qbb^\tail \times \Qbb_\alpha$ we see that $X$ is added to the model $V[G_i]$ by the set forcing $\Qbb_\alpha$.  We would like to use the standard fact that sets added by weakly homogeneous forcing cannot be ordinal definable in the extension.\footnote{Recall that a forcing $\Pbb$ is \emph{weakly homogeneous} if given any $p,q \in \Pbb$ there is an automorphism $\pi$ of $\Pbb$ so that $\pi(p)$ and $q$ are compatible.}
Unfortunately, the forcing $\Qbb$ is not weakly homogeneous, because $\Add(\xi,(2^{<\xi})^{++})$ is not isomorphic to $\Add(\xi^+,1)$.  However, $\Qbb_\alpha$ is \emph{densely weakly homogeneous}: there is a dense set of conditions $q$ so that $\Qbb_\alpha \rest q$ is weakly homogeneous.  These $q$ are exactly those conditions that make a definite choice in the lottery at each stage below $\alpha$ (recall that $\Qbb$ has full support, not Easton support, which allows conditions with support $\alpha$).  Below such a condition, $\Qbb_\alpha$ is simply a product of Cohen forcing and thus weakly homogeneous.  Therefore $X$ is added to $V[G_i]$ by weakly homogeneous forcing, namely $\Qbb_\alpha \rest q$.  

Thus $X$ is not ordinal definable in $V[G_i]$, and so $\HOD^{V[G_i]} \subseteq V[G_{i+1}]$.  This establishes the other direction of the inclusion, completing the proof of the claim, and hence the theorem.
\end{proof}

Combined with Theorem \ref{thm:main-thm} this completes the proof of the first main theorem. \smallskip

The technique above suffices to generate mantle sequences of length $\eta$ for any ordinal. But we needed the forcing to be $\mathord<\eta^+$-closed to make the limit step of the argument go through. It is natural to ask whether we can make the mantle sequence of have length $\Ord$---can we force $V=M^\Ord$ of the extension? (The analogous question for iterated $\HOD$ was already answered by Zadro\.zny in the positive \cite{Zadrozny83:IteratingOrdinalDefinability}.) And what about well orders of length greater than $\Ord$?  There are many such orders already definable in $\ZFC$ (such as $\Ord+1$, $\Ord+\Ord$, $\Ord \times \Ord$, and so on), and various second-order theories guarantee the existence of even longer well orders.

\begin{conjecture}
For any class well-order $\Gamma$ there is a $\ZFC$-preserving class forcing notion $\Mbb(\Gamma)$ definable from $\Gamma$ such that forcing with $\Mbb(\Gamma)$ yields an extension in which the mantle sequence does not stabilize before $\Gamma$ and the $\Gamma$-th inner mantle is the ground model, $M^\Gamma = V$.
\end{conjecture}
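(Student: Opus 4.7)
My plan is to directly generalize $\Mbb(\eta)$ to $\Mbb(\Gamma)$ for a class well-order $\Gamma$, working in a sufficiently strong second-order background theory (such as $\KM$, with further extensions needed if $\Gamma$ is longer than $\Ord$). First I would fix a class $R$ of coding points together with a partition $R = \bigsqcup_{\gamma \in \Gamma} R_\gamma$ into cofinal subclasses, chosen so that for every ordinal $\kappa$ only set-many indices $\gamma$ satisfy $R_\gamma \cap V_\kappa \neq \emptyset$. When $\Gamma$ has order-type at most $\Ord$, this is achievable by a G\"odel pairing-style partition on the enumeration of $R$, in direct analogy with the paper's mod-$\eta$ partition. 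With the partition fixed, $\Mbb(\Gamma)$ is defined exactly as in Definition \ref{def:main-def}, and we set $\Pbb_\gamma = \Mbb(\Gamma) \rest R_{\ge \gamma}$.

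The proof proceeds by class induction on $\gamma \le \Gamma$, with the goal of establishing $(M^\gamma)^{V[G]} = V[G_\gamma]$. Preservation of $\ZFC$ follows from progressive distributivity, as in Lemma \ref{lem:progressively-distributive-iterations-are-tame}. The successor step is identical to the argument in Theorem \ref{thm:main-thm}: one factors $\Pbb_\gamma = \Pbb_{\gamma+1} * \dot\Qbb$, identifies $\Qbb$ as a set-support product of lottery sums of Cohen forcings indexed by $R_\gamma$, and applies the coding and homogeneity arguments of \cite{FuchsHamkinsReitz2015:Set-theoreticGeology}. At a set-sized limit stage $\gamma$, Lemma \ref{lem:limit-lemma-Vkappa-version} applies with any $\kappa > \gamma^+$. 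The genuinely new case is a proper-class-sized limit, most saliently $\gamma = \Ord$, where Jech's lemma cannot be invoked since $\mathord<\gamma^+$-distributivity is meaningless. Here I would argue directly: given $X \in \bigcap_{\delta < \gamma} V[G_\delta]$ a set of ordinals with $\rank(X) < \kappa$, progressive distributivity of each $\Pbb_\delta$ places $X$ in the set-forcing extension of $V$ by $\Pbb_\delta \rest (R \cap V_\mu)$, for a common $\mu > \kappa$. By the partition condition, for $\delta$ beyond the (set-sized) collection of indices appearing in $R \cap V_\mu$, the forcing $\Pbb_\delta \rest (R \cap V_\mu)$ is trivial, so $X \in V$. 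Letting $\kappa$ range over all ordinals gives $\bigcap_{\delta < \gamma} V[G_\delta] = V$.

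The principal obstacle I anticipate is foundational rather than combinatorial. For $\Gamma$ longer than $\Ord$, one has only $\Ord$-many regular cardinals in $V$, so $\Gamma$-many distinct coding points do not fit inside the universe; the iteration $\Mbb(\Gamma)$ must then be treated as a hyperclass-length object, and defining $V$-genericity for such a forcing demands a meta-theory above $\KM$ (for instance one equipped with a truth predicate for classes, or one carried out externally relative to an inaccessible). Moreover, the sequence of inner mantles indexed by $\Gamma$ is itself hyperclass-sized, so the very statement $M^\Gamma = V$ requires a precise higher-order formulation before it can be proved. For $\Gamma$ of order-type at most $\Ord$ the sketch above should yield the conjecture with only routine modifications; the genuinely novel work is in extending both the combinatorics and the meta-theoretic framework to accommodate $\Gamma$ stretching beyond $\Ord$.
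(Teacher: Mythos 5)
You should first be aware that the paper offers no proof of this statement: it is explicitly left as a conjecture, and the authors themselves pinpoint the obstruction --- their limit argument needs $\Mbb(\eta)$ to be $\mathord<\eta^+$-closed, which they arrange by placing all coding points above $\eta^+$, an option that disappears once $\Gamma \ge \Ord$, since coding points are set cardinals and each class $R_\gamma$ must be cofinal in $\Ord$. Your sketch runs into exactly this wall, and it does so at the \emph{set-sized} limit stages, not at the class-sized one. The assertion that at a set-sized limit ``Lemma \ref{lem:limit-lemma-Vkappa-version} applies with any $\kappa > \gamma^+$'' misreads the hypotheses of that lemma: what is required is not that $\kappa$ be large, but that the forcing --- and hence the set-sized algebras $\Pbb_j \cap \Qbb_\kappa$ to which Jech's Lemma \ref{lem:jechs-limit-lemma} is ultimately applied --- be $\mathord<\gamma^+$-closed, or at least $\mathord<\gamma^+$-distributive. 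For $\Mbb(\Ord)$ the coding points begin at some fixed set cardinal and occur cofinally, so for any limit $i$ above the least coding point these algebras add subsets of cardinals below $i$ and are not $\mathord<i^+$-distributive; Jech's lemma simply does not apply there.

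Nor does your direct argument at the class-sized limit rescue those stages. (Note that your partition condition --- only set-many indices appear below any $\kappa$ --- holds automatically for every partition and buys nothing.) The eventual-triviality trick works at $\gamma = \Ord$ because, for a fixed cutoff $\mu$, the indices of coding points below $\mu$ form a bounded set of ordinals, so $\Pbb_\delta \rest (R \cap V_\mu)$ is eventually trivial as $\delta \to \Ord$. At a set-sized limit $i$, however, you must handle sets $X$ of arbitrarily large rank, hence cutoffs $\mu$ far above $i$; since each $R_\gamma$ is a cofinal proper class, for club-many limits $i$ the indices of coding points below $\mu$ (indeed below $i$) are cofinal in $i$ under any pairing-style assignment, so $\Pbb_j \rest (R \cap V_\mu)$ remains nontrivial and insufficiently distributive for every $j < i$, and neither Jech's lemma nor eventual triviality yields $\bigcap_{j<i} V[G_j] \subseteq V[G_i]$. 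Controlling these limit intersections without closure is precisely what the authors leave open; so the claim that the $\Gamma \le \Ord$ case needs ``only routine modifications'' is where the proposal breaks down --- the genuinely hard case is not the meta-theoretic treatment of $\Gamma$ longer than $\Ord$, but already $\Gamma = \Ord$ itself.
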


The reader may worry about iterating the mantle beyond $\Ord$, that something goes wrong with non-set-like well-orders. To assuage that worry, let us show that sufficiently strong axioms imply the $\Gamma$-th mantle always exists. Elementary Transfinite Recursion $\ETR$, originally introduced by Fujimoto \cite{fujimoto2012}, asserts that transfinite recursions of first-order properties along well-founded class relations have solutions.\footnote{The reader who is familiar with the reverse mathematics of second-order arithmetic should compare $\ETR$ to arithmetical transfinite recursion, the analogous principle in arithmetic.}
See \cite{gitman-hamkins2017} for a formal definition and further discussion. This principle is strictly stronger than G\"odel--Bernays set theory, implying $\Con(\ZFC)$ and more. On the other hand, it is weaker than Kelley--Morse set theory, full impredicative second-order set theory, and indeed weaker than $\Pi^1_1$-Comprehension.

\begin{proposition}[Over G\"odel--Bernays set theory without global choice]
Assume $\ETR$. Then $M^\eta$ exists for every ordinal $\eta$ and $M^\Gamma$ exists for every class well-order $\Gamma$.
\end{proposition}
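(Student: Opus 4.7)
The plan is to express the defining recursion of the inner mantle sequence as a transfinite recursion of a single first-order property along the well-order $\Gamma$, and then to invoke $\ETR$ to produce a solution class. The first ingredient is a uniform first-order definition of the mantle operator: by the Laver--Woodin--Usuba theorem there is a formula $\phi_{\mathrm{gnd}}(x,r)$ such that, in any $\ZFC$ inner model $N$, the grounds of $N$ are exactly the classes $N_r = \{x \in N : N \models \phi_{\mathrm{gnd}}(x,r)\}$; consequently $\mu(x, N) \equiv (x \in N) \wedge (\forall r \in N)\, N \models \phi_{\mathrm{gnd}}(x,r)$ uniformly defines the mantle of $N$ in the parameter $N$. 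Using $\mu$ I write the first-order formula $\Psi(x, \gamma, F, \Gamma)$ expressing ``$x$ should belong to $F_\gamma$ given $F \rest (\gamma)_\Gamma$'': $\Psi$ is trivially true if $\gamma$ is $\Gamma$-minimal, asserts $\mu(x, F_{\gamma'})$ if $\gamma$ is the $\Gamma$-successor of $\gamma'$, and asserts $x \in F_{\gamma'}$ for every $\gamma' <_\Gamma \gamma$ if $\gamma$ is a $\Gamma$-limit.

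Next I would apply $\ETR$ to the well-founded class relation $\Gamma$ with the formula $\Psi$, producing a class $\vec M$ with $M^\gamma = \{x : \Psi(x, \gamma, \vec M \rest (\gamma)_\Gamma, \Gamma)\}$ for every $\gamma$ in the field of $\Gamma$. A straightforward induction along $\Gamma$ would then confirm that $M^\gamma$ coincides stage-by-stage with the $\gamma$-th inner mantle: the base case is immediate, the successor case is the correctness of $\mu$ for computing mantles, and the limit case matches the defining intersection. So $\vec M$ witnesses the existence of $M^\Gamma$, and, taking $\Gamma$ to be the ordinal $\eta + 1$, the existence of $M^\eta$ for any ordinal $\eta$.

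The main obstacle is really the first step: extracting a single uniform formula for the mantle operator, which is precisely what the Laver--Woodin--Usuba theorem provides. A secondary worry is that at a limit stage $M^\lambda$ might fail to satisfy $\ZFC$, so that $\mu$ no longer computes a ``mantle'' in the intended semantic sense; however, the definition of existence used in the paper is purely formal, requiring only a class sequence satisfying the recursive clauses, and $\ETR$ delivers precisely that. Because $\ETR$ applies to arbitrary well-founded class relations, the same argument simultaneously handles ordinal $\eta$ and proper class well-orders $\Gamma$ longer than $\Ord$.
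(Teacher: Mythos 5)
Your proposal is correct and follows essentially the same route as the paper, which simply observes that the inner-mantle construction is an elementary (first-order, via the Laver--Woodin uniform definability of grounds) recursion along $\Gamma$, so $\ETR$ provides a solution class. Your additional remarks—spelling out the recursion formula and noting that only the formal existence of a sequence satisfying the recursive clauses is needed, so possible failures of $\ZFC$ at limit stages are irrelevant—are exactly the details the paper leaves implicit.
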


\begin{proof}
The point is that the construction of the sequence of inner mantles along $\Gamma$ is an elementary recursion along $\Gamma$. So $\ETR$ says a solution to this recursion exists.
\end{proof}

So it is sensible to iterate the mantle along any class well-order, even those which are not set-like. 

And seeing that we can iterate the mantle beyond $\Ord$, we can ask whether it ever stabilizes at some class well-order.

\begin{question}
Is there a model of $\ETR$ in which the sequence of inner mantles never stabilizes? That is, is there a model of $\ETR$ in which for every class well-order $\Gamma$ we have that the inner mantle sequence along $\Gamma$ does not stabilize?
\end{question}

\section{\texorpdfstring{Different lengths for the sequences of iterated $\HOD$s and inner mantles}{Different lengths for the sequences of iterated HODs and inner mantles}} \label{sec:different-lengths} 

Combining information from the proofs of Theorems \ref{thm:main-thm} and \ref{thm:meta-iterated-hods}, which together comprise the first main theorem, we get models where the sequences of iterated $\HOD$s and inner mantles both stabilize at or after $\eta$, and where $M^i = \HOD^i$ for all $i \le \eta$. It is natural to ask whether we can separate the sequences. As a first question: Can they have different lengths? It follows immediately from the first main theorem plus the work of \cite{FuchsHamkinsReitz2015:Set-theoreticGeology} that the answer is yes: force with one of their forcings which separate the mantle and the $\HOD$, followed by $\Mbb(\eta)$.
This then gives a model where the sequence of inner mantles has length at least $\eta + 1$ and the sequence of iterated $\HOD$s has length $\eta$, or vice versa if we separated the other way. One can see that several other special cases also follow immediately.
But we would like to do better than that, and independently control the lengths of the two sequences to be any two ordinals we please.

It is the content of our second main theorem that we can in fact do better. Let us briefly sketch the strategy for proving such before going into detail. For the sketch let us only consider the case where the sequence of inner mantles is longer than the sequence of iterated $\HOD$s, as the other case is analogous. We first want to force to get a long sequence of inner mantles, but have $V = \HOD$ in the extension. We then force with $\Mbb(\eta)$, which we have already analyzed, to get a model where the sequence of inner mantles is longer than the iterated $\HOD$ sequence. If we preceded both forcings by first forcing to ensure $V = \HOD = M$, we could exactly control the lengths of the sequences. 

So the missing work is to get a forcing, call it $\Nbb(\eta)$, which will force the ground model to be the $\eta$-th inner mantle while forcing the extension to be its own $\HOD$. And to separate the lengths in the other direction we will need another forcing, call it $\Obb(\eta)$, which makes the ground model the $\eta$-th iterated $\HOD$ but where the extension is its own mantle.

Let us start by considering $\Nbb(\eta)$. We begin by describing the self-encoding forcing, which will be a basic element for building $\Nbb(\eta)$. 

\begin{definition}
Let $\alpha$ be a cardinal. The \emph{self-encoding forcing at $\alpha$}, call it $\Sbb_\alpha$, is an iteration of length $\omega$ which affects the $\GCH$ pattern on the interval $I_\alpha = [\alpha,\lambda_\alpha)$, where $\lambda_\alpha$ is the least beth fixed point $>\alpha$. Stage $0$ of the iteration forces with $\Add(\alpha,1)$ to produce a generic $g_0 \subseteq \alpha_0 = \alpha$. Stage $1$ then codes $g_0$ into the $\GCH$ pattern past $\alpha_0$.  In  \cite{FuchsHamkinsReitz2015:Set-theoreticGeology} this is accomplished by forcing twice: first, the canonical forcing of the $\GCH$ on the interval (adding a subset to each regular cardinal) is used to provide a `clean slate' while preserving the beth fixed point $\lambda_0$, and second, an Easton support product is used to force the $\GCH$ to hold or fail at each cardinal in turn, coding $g_0$ into the resulting pattern.  Note that the cardinals on $I_\alpha$ that are collapsed by the canonical forcing of the $\GCH$ are entirely determined in the ground model and thus constitute a definable set there. With this in mind, it is a matter of careful bookkeeping to interleave these two forcings into a single Easton support product, call it $\Sbb_\alpha^1$. Namely, we will do an Easton support product on the regular cardinals in the interval $[\alpha_0, \beth_{\alpha_0}(\alpha_0))$ so that at stage $\gamma$ we force with one of the following, where the definitions take place in $V[g_0]$:

\begin{itemize}
\item $\Add(\gamma,1)$ if $\gamma$ is collapsed by the canonical forcing of the $\GCH$,
; otherwise
\item $\Add(\gamma,1)$ if the $\GCH$ holds at $\gamma$ and we want to force $\GCH$ to hold at $\gamma$ (this ensures all necessary cardinals are collapsed);
\item $\Add(\gamma^+,1)$ if $\GCH$ fails at $\gamma$ and we want to force $\GCH$ to hold at $\gamma$.
\item $\Add(\gamma,\delta)$, where $\delta$ is the double successor of $\gamma$ among those cardinals that survive the canonical forcing of the $\GCH$, if we want to force $\GCH$ to fail at $\gamma$;
\end{itemize}

In this way, after stage $1$ of the forcing $g_0$ is coded into the $\GCH$ pattern on the $\alpha_0$ many regular cardinals following $\alpha_0$. Let $g_1 \subseteq \Sbb_\alpha^1$ be the generic (over $V[g_0]$) from stage $1$ of the forcing. By means of a pairing function on the ordinals we may consider $g_1$ as a subset of $\alpha_1 = \beth_{\alpha_0}(\alpha_0)$. Note that $\alpha_1$ is the supremum of the $\alpha_0$ many surviving cardinals past $\alpha_0$, as we cofinally often collapsed $2^\gamma$ to be $\gamma^{++}$. 

We then proceed inductively to define the further stages. That is, at stage $n+1$ we want to code the generic $g_n \subseteq \alpha_{n}$ from the previous stage into the $\GCH$ pattern on the regular cardinals in the interval $[\alpha_{n},\beth_{\alpha_n}(\alpha_n))$. This is done by an Easton support product $\Sbb_\alpha^{n+1}$, defined in $V[g_0 * \cdots * g_n]$, where at stage $\gamma$ one of $\Add(\gamma,1)$, $\Add(\gamma^+,1)$, or $\Add(\gamma,\delta)$ is chosen, as in the case for stage $1$. Then if $g_{n+1} \subseteq \Sbb_\alpha^{n+1}$ is the generic (over $V[g_0 * \cdots * g_n]$) for this product we can again consider it as a subset of $\alpha_{n+1} = \beth_{\alpha_0}(\alpha_0)$. Similar to stage $1$, we get that $\alpha_{n+1}$ is the supremum of the $\alpha_n$ many surviving cardinals.
\end{definition}

Observe that the supremum of the $\alpha_n$ is the least beth fixed point above $\alpha$, which we called $\lambda_\alpha$. So we can calculate that $\Sbb_\alpha$ has cardinality $\lambda_\alpha^{\ \omega}$. So forcing with $\Sbb_\alpha$ does not affect the $\GCH$ pattern outside of an interval of the form $(\gamma,\lambda_\alpha^{\ \omega})$, where $2^\gamma \le \alpha$. Also observe that $\lambda_\alpha$ is still the least beth fixed point $> \alpha$ after forcing with $\Sbb_\alpha$. This yields that $\Sbb_\alpha$ preserves the class of beth fixed points.

For defining $\Nbb(\eta)$ we will need to use different coding points than were used for $\Mbb(\eta)$. Fix $\eta$. Let $R$ be the class of cardinals $\alpha > \eta^+$ of the form $\alpha = (2^\lambda)^+$ where $\lambda$ is a beth fixed point. As before, we partition $R$ into $R_i$, for $i < \eta$, consisting of the cardinals $\alpha$ whose index $i(\alpha)$ in $R$ is equivalent to $i$ modulo $\eta$. And we define $R_{>i} = \bigcup_{j > i} R_j$ and $R_{\ge i} = \bigcup_{j \ge i} R_j$ as we did earlier.

\begin{definition}
Fix an ordinal $\eta$ and let $R$ be as just defined. The forcing $\Nbb(\eta)$ is the class forcing whose conditions are set-sized functions $p$ with domain an initial segment of $R$ such that for each $\alpha \in \dom p$ we have that $p(\alpha)$ is an $\Nbb(\eta) \rest (R_{>i(\alpha)} \cap \alpha)$-name for a condition in $\Sbb_\alpha$. Given $p,q \in \Nbb(\eta)$ say that $q \le p$ if $\dom(q) \supseteq \dom(p)$ and for all $\alpha \in \dom(p)$ we have that $p \rest (R_{>i(\alpha)} \cap \alpha)$ forces over $\Nbb(\eta) \rest (R_{>i(\alpha)} \cap \alpha)$ that $q(\alpha) \le p(\alpha)$. 
\end{definition}

Let us check some basic properties of $\Nbb(\eta)$.

\begin{lemma}
Fix an ordinal $\eta$. The forcing $\Nbb(\eta)$ has the following properties.
\begin{enumerate}
\item $\Nbb(\eta)$ is $\mathord<\eta^+$-closed.
\item Forcing with $\Nbb(\eta)$ preserves $R$ and $R_i$ for $i < \eta$.
\item $\Nbb(\eta)$ is a progressively distributive iteration.
\item And the same are true of $\Nbb(\eta) \rest R_{\ge i}$ for $i \le \eta$.
\end{enumerate}
\end{lemma}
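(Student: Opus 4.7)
The plan is to mirror the analysis of $\Mbb(\eta)$ from Section~\ref{sec:meta}, using as main input that each self-encoding block $\Sbb_\alpha$ is itself an $\omega$-length iteration of Easton-support products of Cohen forcings on regular cardinals $\ge \alpha$. In particular $\Sbb_\alpha$ is $\mathord{<}\alpha$-closed and has size at most $\lambda_\alpha^{\,\omega}$, well below the next coding point in $R$. Clause~(1) is then immediate: every stage of $\Nbb(\eta)$ is $\mathord{<}\alpha$-closed for some $\alpha > \eta^+$, and set support preserves this closure level across the iteration, so $\Nbb(\eta)$ is $\mathord{<}\eta^+$-closed.

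For clause~(3) I would unfold each $\Sbb_\alpha$ into its constituent Cohen steps and view $\Nbb(\eta)$ as densely equal to a generalized Cohen iteration in the sense of Definition~\ref{definition.generalizedcoheniteration}, then apply Theorem~\ref{thm.generalizedcoheniterations}. This circumvents the difficulty that the stage-$\alpha$ forcing $\Sbb_\alpha$ is defined in a strictly smaller inner model than $V^{\Nbb(\eta)\rest\alpha}$, precisely the obstacle the generalized Cohen iteration framework was built to handle; it also yields progressive distributivity as a direct byproduct. The hypotheses of the theorem---size preservation and the $\alpha$-cover property at each Cohen step---follow from the spacing of $R$ together with a size bound on $\Nbb(\eta)\rest\alpha$ relative to the next coding point.

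For clause~(2), preservation of $R$ and the $R_i$, I would use that each $\Sbb_\alpha$ preserves beth fixed points and only alters the $\GCH$ pattern in a bounded interval $(\gamma_\alpha, \lambda_\alpha^{\,\omega})$. Because the coding points are successors of $2^\lambda$ for beth fixed points $\lambda$, and because the interval on which $\Sbb_\alpha$ acts lies (by the spacing of $R$) strictly below the next coding point, each $\alpha \in R$ retains the form witnessing its membership in $R$ after the full forcing, so $R$ is preserved. The partition into $R_i$'s, being defined from the order type of $R$, is preserved with it. Clause~(4) is then immediate, since $\Nbb(\eta)\rest R_{\ge i}$ is the restriction to a cofinal subclass of $R$ satisfying the same spacing requirements, so each of the preceding arguments applies to it verbatim.

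The main obstacle I anticipate is the interaction analysis underlying clauses~(2) and~(3): verifying rigorously that the $\GCH$-pattern footprint $(\gamma_\alpha,\lambda_\alpha^{\,\omega})$ of each $\Sbb_\alpha$ lies strictly below the beth-fixed-point witnesses of the next coding points in $R$, so that distinct coding points and their coding intervals do not interfere. This reduces to a beth-fixed-point computation; the definition of $R$ as successors of $2^\lambda$ for beth fixed points appears arranged to accommodate it, but the details require care, and if necessary a minor further thinning of $R$ would suffice.
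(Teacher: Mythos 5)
Your proposal follows essentially the same route as the paper's (quite terse) proof: closure of each stage from the coding points lying above $\eta^+$ for clause (1), unfolding each $\Sbb_\alpha$ into its constituent Cohen steps so that $\Nbb(\eta)$ becomes a generalized Cohen iteration and Theorem \ref{thm.generalizedcoheniterations} yields progressive distributivity for clause (3), the spacing of the coding points (with $\Sbb_\alpha$ preserving beth fixed points and having bounded footprint) for clause (2), and the observation that the restrictions $\Nbb(\eta)\rest R_{\ge i}$ satisfy the same hypotheses for clause (4). In fact you supply more detail than the paper does, particularly on the footprint computation behind preservation of $R$, and correctly identify that as the point requiring care; no genuinely different idea is involved.
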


\begin{proof}
$(1)$ Because each stage in the forcing is $\eta^+$-closed, which is because we only use coding points $>\eta^+$. 

$(2)$ Because of how we chose the coding points. 

$(3)$ This is an application of Theorem \ref{thm.generalizedcoheniterations}. The point is, $\Nbb(\eta)$ is an iteration of iterations of Cohen forcings defined in appropriate inner models. But this can be thought of as a single iteration, and so $\Nbb(\eta)$ is a generalized Cohen iteration, using that the coding points are spaced out sufficiently to preserve the necessary cardinals.

$(4)$ By the same arguments.
\end{proof}

\begin{theorem}
Fix $\eta$. Let $G \subseteq \Nbb(\eta)$ be generic over $V$. Then, $(M^\eta)^{V[G]} = V$ and $\HOD^{V[G]} = V[G]$.
\end{theorem}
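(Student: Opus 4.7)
For the inner mantle assertion, I would essentially transcribe the proof of Theorem~\ref{thm:main-thm}. Set $\Pbb_i = \Nbb(\eta) \rest R_{\ge i}$ and $G_i = G \cap \Pbb_i$, so that $\Pbb_0 \supseteq \Pbb_1 \supseteq \cdots \supseteq \Pbb_\eta$ is a continuous descending sequence of complete suborders---the reduction-to-the-tail argument from that proof applies verbatim---with $\Pbb_\eta$ trivial. The goal is to show by induction on $i \le \eta$ that $(M^i)^{V[G]} = V[G_i]$, from which $(M^\eta)^{V[G]} = V$ falls out. The limit step applies Lemma~\ref{lem:limit-lemma-Vkappa-version} exactly as before, using that $\Nbb(\eta)$ is a progressively distributive iteration as just verified.

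For the successor step one factors $\Pbb_i \cong \Pbb_{i+1} * \dot\Qbb$; in $V[G_{i+1}]$ the quotient $\Qbb$ becomes the set-support product $\prod_{\alpha \in R_i} \Sbb_\alpha$ of self-encoding forcings. The argument of \cite[Theorem~66]{FuchsHamkinsReitz2015:Set-theoreticGeology} then applies: progressive closure plus commutativity of the product confines each newly-added set of ordinals to a set-sized initial factor followed by a distributive tail, excluding it from some ground of $V[G_i]$ and giving $M^{V[G_i]} \subseteq V[G_{i+1}]$; and a density argument shows that each set of ordinals in $V[G_{i+1}]$ is coded into the $\GCH$ pattern on arbitrarily high blocks of $R_i$, a coding which persists under any set forcing and so lies in every ground of $V[G_i]$, giving the reverse inclusion.

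For the $\HOD$ assertion, the central new ingredient is the self-encoding property of $\Sbb_\alpha$. By construction, the stage-$(n+1)$ forcing writes the generic $g_n \subseteq \alpha_n$ into the $\GCH$ pattern on the interval $[\alpha_n, \alpha_{n+1})$, and the spacing of the coding points $R$ (successors of powers of beth fixed points, with gaps at least $\lambda_\alpha^{\,\omega}$) ensures that this pattern is not subsequently disturbed by other factors of $\Nbb(\eta)$. Hence in $V[G]$ each $g_n$ is ordinal definable from the $\GCH$ pattern with parameter $\alpha$, and inductively the entire stage-$\alpha$ generic lies in $\HOD^{V[G]}$. Since $R$ is itself ordinal definable from $\eta$ and the beth function, we conclude $G \in \HOD^{V[G]}$ and therefore $V[G] \subseteq \HOD^{V[G]}$, provided that $V \subseteq \HOD^{V[G]}$; the latter is our standing assumption that $V \models V = \HOD$---exactly the preparatory step flagged in the strategy outline just before this theorem---whose definable well-ordering of $V$ lifts to $V[G]$ and witnesses the ordinal definability of every set in $V$. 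The reverse inclusion $\HOD^{V[G]} \subseteq V[G]$ is automatic.

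The main technical obstacle is verifying the non-interference claim underlying the self-encoding argument: one must check that the $\GCH$ pattern on each coded interval $[\alpha_n, \alpha_{n+1})$ observed in $V[G]$ is exactly the pattern written there at stage $n+1$ of $\Sbb_\alpha$, with no contamination from later stages of the same $\Sbb_\alpha$, from other factors $\Sbb_\beta$ at different coding points, or from the overall closure structure of $\Nbb(\eta)$. This amounts to careful size and chain-condition bookkeeping using the cardinal gaps built into $R$ and the $\lambda_\alpha^{\,\omega}$ size bound for each $\Sbb_\alpha$, but it is computation rather than any new conceptual difficulty.
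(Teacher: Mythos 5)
Your treatment of the inner-mantle half is essentially the paper's own argument: the same chain $\Pbb_i = \Nbb(\eta)\rest R_{\ge i}$, the same successor-step factoring of the quotient as $\prod_{\alpha\in R_i}\Sbb_\alpha$ with ``new sets are excluded from a ground by product factoring plus closure of the tail, old sets are coded cofinally into the $\GCH$ pattern,'' and the same appeal to Lemma~\ref{lem:limit-lemma-Vkappa-version} at limits. That part is fine.

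The $\HOD$ half has a genuine gap. The theorem carries no hypothesis on the ground model, and the paper proves it unconditionally; you instead derive $V\subseteq\HOD^{V[G]}$ from a ``standing assumption'' that $V\models V=\HOD$, which is not part of the statement (the preparatory forcing $\Cbb$ belongs to the later Theorem~\ref{thm:independent-lengths}, not here). The missing idea is that $\Nbb(\eta)$ itself takes care of the ground model: factoring $\Pbb = \Pbb_1 * \dot\Qbb$ with $\Qbb$ the final block of self-encoding forcings, genericity of $\Qbb$ codes \emph{every} set of $V[G_1]$ (in particular every set of $V$, and every $\Qbb_\alpha$-name in $V[G_1]$) into the $\GCH$ pattern of $V[G]$, by exactly the density argument you already used for the mantle direction; then each set of $V[G]$ is $\tau_{H_\alpha}$ for such a name $\tau$, and $H_\alpha$ is ordinal definable because each $\Sbb_\beta$ is self-encoding and $R$ is preserved. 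Your self-encoding analysis covers only the generic, not the names, so you get nothing about sets of $V$ without the extra hypothesis. Moreover, even granting $V\models V=\HOD$, your justification does not work as stated: ordinal definability in $V$ does not transfer to ordinal definability in $V[G]$ by ``lifting the definable well-ordering,'' since $V$ need not be a first-order definable class of $V[G]$ (for infinite $\eta$ it is recovered only via the class-length inner-mantle recursion, which is the very point of the paper's second-order formalization), and no homogeneity argument is available here to substitute for definability of $V$. Finally, phrasing the conclusion as $G\in\HOD^{V[G]}$ is the wrong granularity: $\HOD$ concerns sets, so what one shows is that each set-sized piece $H_\alpha$ of the generic is ordinal definable, which is how the paper proceeds.
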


\begin{proof}
Once again for notational convenience let $\Pbb = \Nbb(\eta)$ and set $\Pbb_i = \Pbb \rest R_{\ge i}$, to obtain
\[
\Pbb = \Pbb_0 \supseteq \Pbb_1 \supseteq \cdots \supseteq \Pbb_i \supseteq \cdots \qquad i < \eta
\]
a continuous descending chain of complete suborders. As before, we can think of it as an iteration of order type $\eta^\star$; see the discussion surrounding Claim~\ref{claim:eta-star}. Set $G_i = G \cap \Pbb_i$. Work in $V[G_{i+1}]$. Here $V[G_i]$ is a forcing extension of $V[G_{i+1}]$ by a forcing $\Qbb$ which is the progressively closed product
\[
\Qbb = \prod_{\alpha \in R_i} \Sbb_\alpha,
\]
similar to the analysis of $\Mbb(\eta)$. 

Let us now verify the claim about the $\eta$-th mantle of $V[G]$. We will establish the following.

\begin{claim}
For each $i \le \eta$ we have $(M^i)^{V[G]} = V[G_i]$.
\end{claim}

Again, this claim immediately implies that $(M^i)^{V[G]}$ satisfies $\ZFC$ for each $i \le \eta$.

Suppose inductively the claim is true for some $i < \eta$, that is $(M^i)^{V[G]} = V[G_i]$. We want to show that the mantle of $V[G_i]$ is $V[G_{i+1}]$. Our argument here follows that of \cite[Theorem 67]{FuchsHamkinsReitz2015:Set-theoreticGeology}. 
Factor $\Pbb_i$ as $\Pbb_{i+1} * \dot \Qbb$ and factor $G_i$ as $G_{i+1} * H$ where $H \subseteq \Qbb$ is generic over $V[G_{i+1}]$. Let $\Qbb_\alpha = \prod_{\beta \in R_i \cap \alpha} \Sbb_\beta$ be the initial segment of $\Qbb$ below $\alpha$ and $\Qbb^\alpha$ be the tail of $\Qbb$ beyond $\alpha$, so that $\Qbb$ factors as $\Qbb_\alpha \times \Sbb_\alpha \times \Qbb^\alpha$. Then $\Qbb_\alpha$ has size $<\alpha$, due to the spacing of the coding points, and $\Qbb^\alpha$ is $\mathord\le\lambda_\alpha$-closed. So in $V[G_i] = V[G_{i+1}][H]$ the behavior of the $\GCH$ pattern on the interval $I_\alpha = [\alpha,\lambda_\alpha)$, where $\lambda_\alpha$ is the least beth fixed point $>\alpha$, is determined entirely by $\Sbb_\alpha$. 
Let $H_\alpha = H \cap \Qbb_\alpha$ and $H^\alpha = H \cap \Qbb^\alpha$. Because $\Qbb$ is a progressively closed product we have that any set in $V[G_{i+1}][H]$ is already in $V[G_{i+1}][H_\alpha]$ for some $\alpha$. 

Let us see that $V[G_{i+1}] \subseteq M^{V[G_{i+1}][H]}$. Any ground of $V[G_{i+1}][H]$ must agree with it about the $\GCH$ pattern on a tail. And by a density argument every set of ordinals in $V[G_{i+1}]$ was coded into the $\GCH$ pattern of $V[G_{i+1}][H]$ cofinally often. So $V[G_{i+1}]$ is contained in every ground of $V[G_{i+1}][H]$, and thus is contained in the mantle. For the other direction of the containment, observe that $V[G_{i+1}][H^\alpha]$ is a ground of $V[G_{i+1}][H]$, as $\Qbb_\alpha \times \Sbb_\alpha$ is set-sized forcing. But by increasing closure of the tail forcings, we have that $M^{V[G_{i+1}][H]} \subseteq \bigcap_{\alpha \in R_i} V[G_{i+1}][H^\alpha] = V[G_{i+1}]$, as desired.

We now want to check the limit case of the induction. Fix $i < \eta$ a limit ordinal. The sequence
\[
\Pbb = \Pbb_0 \supseteq \Pbb_1 \supseteq \cdots \supseteq \Pbb_j \supseteq \cdots \qquad j < i
\]
is a continuous descending chain of complete suborders. And $\Pbb$ is $\mathord<i^+$-closed, because it is $\mathord<\eta^+$-closed, and a progressively distributive iteration. Moreover, we get the factoring properties of Lemma \ref{lem:limit-lemma-Vkappa-version}, by a similar argument as in the $\Mbb(\eta)$ case. So we can appeal to that lemma to conclude that $V[G_i] = \bigcap_{j < i} V[G_j]$, establishing the limit case of the induction.

Finally, observe that we have $\bigcap_{j < \eta} \Pbb_j$ is trivial forcing, and so $(M^\eta)^{V[G]} = V[G_\eta] = V$, as desired. So we have seen that $\Pbb = \Nbb(\eta)$ has the desired properties with regard to the sequence of inner mantles.

Now let us see that $\HOD^{V[G]} = V[G]$. As a first step, let us observe that every set in $V[G_1]$ is ordinal definable in $V[G]$. This is because $\Pbb$ can be factored as $\Pbb_1 * \dot\Qbb$ where $\Qbb = \prod_{\alpha \in R_1} \Sbb_\alpha$ and by genericity $\Qbb$ codes every ground model---which in this context is $V[G_1]$---set into the $\GCH$ pattern. In particular, every $\Qbb$-name in $V[G_1]$ is ordinal definable in $V[G]$. And because $\Qbb$ is a progressively closed iteration we get that every set in $V[G]$ already appears in $V[G_1][H_\alpha]$ for some $\alpha$, where $H_\alpha = H \cap \Qbb_\alpha$ and $H$ comes from factoring $G$ as $G_1 * H$. In other words, every set in $V[G]$ is of the form $\tau_{H_\alpha}$, where $\tau \in V[G_1]$ is a $\Qbb_\alpha$-name for some $\alpha$. But for each stage $\beta$ of $\Qbb_\alpha$ the generic filter added at that stage was coded into the $\GCH$ pattern on the interval $I_\beta = [\beta,\lambda_\beta)$. And in $V[G]$ we can definably combine these filters together on the product, using that $R$ is preserved, and so $H_\alpha$ is ordinal definable in $V[G]$. So $\tau_{H_\alpha}$ is ordinal definable in $V[G]$, establishing that $\HOD^{V[G]} = V[G]$.
\end{proof}

We now turn to describing $\Obb(\eta)$, the forcing which forces the ground model to be the $\eta$-th iterated $\HOD$ of the extension while the extension is its own mantle. We follow \cite[Theorem 70]{FuchsHamkinsReitz2015:Set-theoreticGeology}, which in turn is an adaptation of the main result of \cite{HamkinsReitzWoodin2008:TheGroundAxiomAndVequalsHOD}. The key insight is that forcing with an appropriately chosen Silver iteration will preserve the $\HOD$ while forcing the Ground Axiom, which asserts that $V = M$, to hold. 

\begin{definition}
Fix an ordinal $\eta$. Let $R$ be a suitably chosen class of coding points above $\eta^+$, as in the discussion from section~\ref{sec:meta}. In particular, $R$ is chosen so that any two elements of $R$ have a strong limit cardinal between them.  And let $\Mbb(\eta)$ be the forcing from Definition~\ref{def:main-def} using $R$ for its coding points. Let $\dot \Sbb$ be an $\Mbb(\eta)$-name for an Easton support Silver iteration, the $\Ord$-length iteration that adds a Cohen subset to the regular cardinals $(2^\alpha)^+$ for $\alpha \in R$. Set $\Obb(\eta) = \Mbb(\eta) * \dot \Sbb$.
\end{definition}

Observe that the stages of the Silver iteration occur between the coding points used by $\Mbb(\eta)$, since $R$ was chosen to be sufficiently spaced out. And forcing with the Silver iteration will not affect whether the $\GCH$ holds on $R$ and preserves the definition of $R$. 

\begin{theorem}
Fix $\eta$. Let $G \subseteq \Obb(\eta)$ be generic over $V$. Then, $(\HOD^\eta)^{V[G]} = V$ and $M^{V[G]} = V[G]$.
\end{theorem}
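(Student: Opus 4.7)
The plan is to factor the generic $G$ as $G_0 * S$, where $G_0 \subseteq \Mbb(\eta)$ is generic over $V$ and $S \subseteq \Sbb$ is generic over $V[G_0]$, so that $V[G] = V[G_0][S]$. Applied to $V[G_0]$, Theorem~\ref{thm:meta-iterated-hods} already gives $(\HOD^\eta)^{V[G_0]} = V$. The remaining work is to analyze what the Silver iteration $\Sbb$ does over $V[G_0]$. I will establish two key properties: \textbf{(a)} $\Sbb$ preserves $\HOD$, so that $\HOD^{V[G_0][S]} = \HOD^{V[G_0]}$, and \textbf{(b)} $\Sbb$ forces the Ground Axiom, so that $M^{V[G_0][S]} = V[G_0][S]$. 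These two together will yield both halves of the theorem.

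For \textbf{(a)} I will exploit that the Easton support Silver iteration is densely weakly homogeneous: below a dense set of conditions $\Sbb$ is a product of Cohen forcings at the prescribed regular cardinals, and such products are weakly homogeneous. Since $\Sbb$ is a progressively closed iteration in $V[G_0]$, every set of ordinals in $V[G_0][S]$ already appears in some set-sized initial segment, and the usual argument that weakly homogeneous set forcing adds no new ordinal-definable sets then shows that any set OD in $V[G_0][S]$ lies in $V[G_0]$ and is OD there; conversely any set OD in $V[G_0]$ remains OD in $V[G_0][S]$ since $V[G_0]$ is uniformly definable in $V[G_0][S]$ as a ground. From $\HOD^{V[G_0][S]} = \HOD^{V[G_0]}$ I will then induct on $i \le \eta$ to conclude that $\HOD^i_{V[G]} = \HOD^i_{V[G_0]}$ for every $1 \le i \le \eta$, using absoluteness of $\HOD$ to a transitive inner model at successor stages and Lemma~\ref{lem:limit-lemma-Vkappa-version} at limit stages, in the same fashion as the limit argument in Theorem~\ref{thm:meta-iterated-hods}. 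Hence $\HOD^\eta_{V[G]} = \HOD^\eta_{V[G_0]} = V$.

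For \textbf{(b)} I will follow the pattern of \cite[Theorem~70]{FuchsHamkinsReitz2015:Set-theoreticGeology}. By design $\Sbb$ adds a Cohen subset to each cardinal $(2^\alpha)^+$ for $\alpha \in R$, and a density argument shows that every set of ordinals in $V[G_0][S]$ is coded into the resulting $\GCH$ pattern on a cofinal class of these cardinals. Any ground $W \subseteq V[G_0][S]$ must agree with $V[G_0][S]$ on the $\GCH$ pattern past the size of the forcing realizing $V[G_0][S]$ as a set-forcing extension of $W$, so $W$ recovers every such coded set and hence $W = V[G_0][S]$; thus $V[G_0][S]$ satisfies the Ground Axiom and $M^{V[G]} = V[G]$. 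The main obstacle I expect is the class-level version of \textbf{(a)}: the dense weak homogeneity of $\Sbb$ and the definability of $V[G_0]$ as a ground of $V[G_0][S]$ must suffice to preserve $\HOD$ across a class forcing extension, and this preservation must propagate cleanly through the inductive definition of iterated $\HOD$ all the way to stage $\eta$, especially at limits where an analogue of Jech's lemma will need to be invoked.
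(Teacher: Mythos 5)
Your decomposition $G = G_0 * S$ and the reduction to the two facts \textbf{(a)} $\HOD^{V[G_0][S]} = \HOD^{V[G_0]}$ and \textbf{(b)} $V[G_0][S]$ satisfies the Ground Axiom matches the paper's strategy, and the homogeneity direction of \textbf{(a)} is essentially the paper's argument. But \textbf{(b)} as you argue it is wrong, and in fact it contradicts \textbf{(a)}. The Silver iteration $\Sbb$ does not code anything into the $\GCH$ pattern: it merely adds a Cohen subset of $(2^\alpha)^+$ for each $\alpha \in R$, and the coding points $R$ are arranged precisely so that $\Sbb$ leaves the $\GCH$ pattern on $R$ untouched. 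There is no density argument making ``every set of ordinals in $V[G_0][S]$ coded cofinally into the $\GCH$ pattern''; if there were, every such set would be ordinal definable (from the starting point of its coding block and the definable class $R$), giving $V[G_0][S] = \HOD^{V[G_0][S]}$ and refuting the very separation of $\HOD$ from $V[G]$ that the theorem asserts. Only the sets of $V[H_1] = \HOD^{V[G_0]}$ (where $\Mbb(\eta) = \Pbb_1 * \dot\Qbb$ and $G_0 = H_1 * J$) are coded, by the last block $\Qbb$ of $\Mbb(\eta)$, and that coding survives $\Sbb$. Consequently the coding argument only shows that any ground of $V[G_0][S]$ contains $V[H_1]$; to rule out nontrivial grounds altogether one needs the Hamkins--Reitz--Woodin machinery, as in \cite[Theorem 70]{FuchsHamkinsReitz2015:Set-theoreticGeology}: suppose $V[G_0][S] = W[\ell]$ for a nontrivial set forcing $\Rbb \in W$, factor $J = J_\alpha \times J^\alpha$ and $S = S_\alpha * S^\alpha$ far above $\card{\Rbb}$, use $\delta$-approximation and covering to get $V[H_1][J^\alpha] \subseteq W$, choose a suitable set $A \in W$ coding names and subsets of $\delta$, pin down $W = V[H_1][J^\alpha][A]$ by a second approximation-and-cover argument, and derive the contradiction $W[\ell] = V[H_1][J][S_\alpha] \subsetneq V[H_1][J][S] = W[\ell]$. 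This argument is the heart of the theorem and is entirely missing from your proposal.

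Two smaller points on \textbf{(a)}. Your reverse inclusion rests on ``$V[G_0]$ is uniformly definable in $V[G_0][S]$ as a ground,'' but the Laver--Woodin ground-definability theorem applies to set-forcing grounds, and $\Sbb$ is a class-length iteration, so this justification does not go through; the paper instead gets $\HOD^{V[G_0]} \subseteq \HOD^{V[G_0][S]}$ from the fact that every set of $V[H_1] = \HOD^{V[G_0]}$ remains coded in the $\GCH$ pattern on $R$ in $V[G_0][S]$. Finally, once $\HOD^{V[G_0][S]} = \HOD^{V[G_0]}$ is known as an identity of inner models, the rest of the iterated $\HOD$ sequence agrees automatically, since each later stage is computed inside this common model and limit stages are plain intersections; no appeal to the Jech-style limit lemma is needed there.
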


\begin{proof}
For notational convenience, let $\Pbb$ be an alias for $\Mbb(\eta)$. Factor $G$ as $H * K$ where $H \subseteq \Pbb$ is generic over $V$ and $K \subseteq \Sbb$ is generic over $V[H]$. It follows from Theorem~\ref{thm:main-thm} that in $V[H]$ the $\eta$-th inner mantle and the $\eta$-th iterated $\HOD$ are both $V$. To prove the theorem it therefore suffices establish two claims: first that $\HOD^{V[H][K]} = \HOD^{V[H]}$; and second that $M^{V[H][K]} = V[H][K]$. 

\begin{claim}
$\HOD^{V[H][K]} = \HOD^{V[H]}$.
\end{claim}

The Silver iteration $\Sbb$ is weakly homogeneous and ordinal definable. Thereby we can conclude that $\HOD^{V[H][K]} \subseteq \HOD^{V[H]}$. For the other direction, factor $\Pbb$ as $\Pbb_1 * \dot \Qbb$, where $\Pbb_1 = \Pbb \rest R_{\ge 1}$. Let $H_1 * J$ be the corresponding factoring of $H$. Every set in $V[H_1]$ is coded arbitrarily high in the $\GCH$ pattern on $R$ in $V[H_1][J]$. And since $\Sbb$ does not affect $R$ nor the $\GCH$ pattern on $R$, we get that every set in $V[H_1]$ is coded arbitrarily high in the $\GCH$ pattern on $R$ in $V[H_1][J][K] = V[H][K]$. Thus, $\HOD^{V[H][K]} \supseteq V[H_1] = \HOD^{V[H]}$. This finishes the proof of the claim.

\begin{claim}
$M^{V[H][K]} = V[H][K]$.
\end{claim}

The key idea here is that we can think of forcing with $\Pbb = \Mbb(\eta)$ as having a last step, which is essentially the forcing to make the ground model the $\HOD$ and the mantle of the extension---cf. \cite[Theorem 66]{FuchsHamkinsReitz2015:Set-theoreticGeology}. So we think of $\Pbb * \dot \Sbb$ as $\Pbb_1 * \dot \Qbb * \dot \Sbb$. In their Theorem 70---which is an adaptation the main result of \cite{HamkinsReitzWoodin2008:TheGroundAxiomAndVequalsHOD}---Fuchs, Hamkins, and Reitz show that forcing with $\Sbb$ after forcing with $\Qbb$ forces the Ground Axiom. So the same argument applied here will show that $V[H][K]$ is its own mantle. We sketch the argument here and refer the reader to \cite{FuchsHamkinsReitz2015:Set-theoreticGeology} or \cite{HamkinsReitzWoodin2008:TheGroundAxiomAndVequalsHOD} for full details.

Once again factor $\Pbb$ as $\Pbb_1 * \dot \Qbb$ and $H$ as $H_1 * J$. Suppose towards a contradiction that $V[H][K]$ has a nontrivial ground $W$. That is, $V[H][K] = W[\ell]$ for $\ell \in V[H][K]$ generic over $W$ for some nontrivial set forcing $\Rbb \in W$. Because $W$ and $V[H][K]$ must agree an a tail of the $\GCH$ pattern and because all of $V[H_1]$ is coded into the $\GCH$ pattern in $V[H][K]$, we can conclude that $V[H_1] \subseteq W$. We then factor $J = J_\alpha \times J^\alpha$ and $K = K_\alpha * K^\alpha$ at $\alpha$ sufficiently far above the size of $\Rbb$. By a $\delta$-approximation and covering argument for appropriately chosen $\delta > \card{\Rbb}^W$ we can then see that $V[H_1][J^\alpha] \subseteq W$. 

Next, pick $A$ in $W$ which codes all the subsets of $\Rbb$, $\Rbb$-names which will be interpreted to be the generics $J_\alpha$, and $K_\alpha$, and all subsets of $\delta$. So then $V[H_1][J^\alpha][A][\ell]$ contains $V[H_1][J][K_\alpha]$.  Because the coding points for $\Qbb$ and the levels in $\Sbb$ are sufficiently spaced out, this $A$ can be chosen to be smaller than the closure of the tail of $\Sbb$. So then $A,\ell \in V[H_1][J][K]$ must already be in $V[H_1][J][K_\alpha]$, and so $V[H_1][J][K_\alpha] = V[H_1][J^\alpha][A][\ell]$. Therefore, $V[H_1][J][K]$ is the forcing extension of $V[H_1][J^\alpha][A]$ by $\ell * K^\alpha$, and it is also the forcing extension of $W$ by $\ell$. So by another $\delta$-approximation and covering argument, using that $A$ codes enough information about $\delta$ to ensure that $W$ and $V[H_1][J^\alpha][A]$ agree on the cardinal successor of $\delta$, we can conclude that $W = V[H_1][J^\alpha][A]$. But then 
\[
W[\ell] = V[H_1][J][K_\alpha] \subsetneq V[H_1][J][K] = W[\ell].
\]

Having reached the desired contradiction we conclude $V[H][K]$ must have no nontrivial grounds. Therefore it is its own mantle, completing the proof of the claim, which finishes the proof of the theorem.
\end{proof}

With these forcings in hand, we are now ready to state and prove the second main theorem, that we can force the sequences of inner mantles and of iterated $\HOD$s to have different lengths.

\begin{theorem} \label{thm:independent-lengths}
Let $\zeta$ and $\eta$ be ordinals. Then there are forcings $\Abb$ and $\Bbb$, uniformly definable in $\zeta$ and $\eta$ as parameters, 
so that:
\begin{itemize}
\item Forcing with $\Abb$ gives a model where the sequence of iterated $\HOD$s has length exactly $\zeta$ and the sequence of inner mantles has length exactly $\zeta + \eta$.
\item Forcing with $\Bbb$ gives a model where the sequence of inner mantles has length exactly $\zeta$ and the sequence of iterated $\HOD$s has length exactly $\zeta + \eta$.
\end{itemize}
\end{theorem}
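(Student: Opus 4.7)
The strategy is to compose three forcings. For $\Abb$, I would take
\[
\Abb = \dot\Pbb_0 * \dot\Nbb(\eta) * \dot\Mbb(\zeta),
\]
where $\Pbb_0$ is a preliminary forcing (drawing on Reitz's work on the Ground Axiom combined with the Hamkins--Reitz--Woodin technique for forcing $V=\HOD$) whose generic extension $V_0$ satisfies both the Ground Axiom and $V = \HOD$. Symmetrically, I would set $\Bbb = \dot\Pbb_0 * \dot\Obb(\eta) * \dot\Mbb(\zeta)$. Write $V_0 \subseteq V_1 \subseteq V_2$ for the three successive generic extensions corresponding to $\Pbb_0$, $\dot\Nbb(\eta)$ (or $\dot\Obb(\eta)$), and $\dot\Mbb(\zeta)$.

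The key observation enabling the composition is that the recursions defining the sequences of inner mantles and iterated $\HOD$s are \emph{local}: $M^{\alpha+1}$ is the mantle of $M^\alpha$ computed inside $M^\alpha$, and similarly for $\HOD^{\alpha+1}$. A routine transfinite induction then shows that once we have identified $(M^\zeta)^{V_2} = V_1$, the tail of the mantle sequence of $V_2$ is determined by the mantle sequence of $V_1$, namely $(M^{\zeta + \alpha})^{V_2} = (M^\alpha)^{V_1}$ for every $\alpha$; the successor step is immediate from the definition, and the limit step holds because both sides are intersections. The analogous statement holds for iterated $\HOD$.

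With this pass-through property in hand, the theorem follows by assembling the results already proved. For $\Abb$, Theorems~\ref{thm:main-thm} and~\ref{thm:meta-iterated-hods} give $(M^\zeta)^{V_2} = (\HOD^\zeta)^{V_2} = V_1$. Because the theorem on $\Nbb(\eta)$ yields $\HOD^{V_1} = V_1$, the iterated $\HOD$ sequence of $V_2$ stabilizes exactly at $\zeta$. On the other hand, the same theorem provides $(M^\eta)^{V_1} = V_0$, so by the pass-through property $(M^{\zeta+\eta})^{V_2} = V_0$; since $V_0$ satisfies the Ground Axiom, $M^{V_0} = V_0$, and the mantle sequence stabilizes exactly at $\zeta+\eta$. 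The case of $\Bbb$ is entirely symmetric, using the theorem on $\Obb(\eta)$, which provides $(\HOD^\eta)^{V_1} = V_0$ while $V_1 = M^{V_1}$.

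The principal obstacle is simply the verification of the pass-through property together with the fact that neither sequence stabilizes prematurely at any stage strictly below the claimed length. Non-premature stabilization before stage $\zeta$ comes directly from Theorems~\ref{thm:main-thm} and~\ref{thm:meta-iterated-hods}, while non-premature stabilization on the interval $[\zeta, \zeta+\eta)$ follows from the corresponding non-stabilization statements inside $V_1$, transported through the pass-through. Thus everything reduces to careful bookkeeping of which of the previously established theorems applies at which stage, and to checking that the limit clause of the mantle recursion is well-behaved across the boundary at $\zeta$---which it is, precisely because the pass-through identity commutes with intersections.
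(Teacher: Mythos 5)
Your proposal is correct and follows essentially the same route as the paper: the paper also takes $\Abb = \Cbb * \dot\Nbb(\eta) * \dot\Mbb(\zeta)$ and $\Bbb = \Cbb * \dot\Obb(\eta) * \dot\Mbb(\zeta)$, where $\Cbb$ (Reitz's coding forcing) plays exactly the role of your $\Pbb_0$ in arranging $V = \HOD = M$ in the first extension, and then reads off the lengths from Theorems~\ref{thm:main-thm} and~\ref{thm:meta-iterated-hods} together with the theorems on $\Nbb(\eta)$ and $\Obb(\eta)$. Your explicit statement and verification of the pass-through identity $(M^{\zeta+\alpha})^{V_2} = (M^\alpha)^{V_1}$ is a point the paper leaves implicit, but it is the same underlying argument.
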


\begin{proof}
Fix ordinals $\zeta$ and $\eta$. Let $\Cbb$ be the class forcing which forces every set to be coded in the $\GCH$ pattern cofinally often---see \cite{Reitz2006:Dissertation}. 

Set $\Abb = \Cbb * \dot \Nbb(\eta) * \dot \Mbb(\zeta)$. Let $G * H * K$ be generic over $V$ for $\Abb$. By the properties of $\Cbb$ we get that $M^{V[G]} = \HOD^{V[G]} = V[G]$. Thus, in $V[G][H]$ the sequence of inner mantles has length exactly $\eta$, while $\HOD^{V[G][H]} = V[G][H]$. Finally, we get that in $V[G][H][K]$ the sequence of inner mantles has length exactly $\zeta + \eta$, while the sequence of iterated $\HOD$s has length exactly $\zeta$, as desired.

Now set $\Bbb = \Cbb * \dot \Obb(\eta) * \dot \Mbb(\zeta)$. A similar analysis shows that in a forcing extension by $\Bbb$ the sequence of inner mantles has length exactly $\zeta$ while the sequence of iterated $\HOD$s has length exactly $\zeta + \eta$.
\end{proof}

To get that the sequence of inner mantles and the sequence of iterated $\HOD$s can be forced to have different lengths, we used forcings that make one sequence an initial segment of the other. How independent can we make the sequence of inner mantles and the sequence of iterated $\HOD$s? Let us ask this question for two specific cases, though many other variants can be asked. The first case is whether we can have the sequences only line up at the beginning, where $M^0 = \HOD^0 = V$, and the end.

\begin{question}
Let $\eta$ be an ordinal. Is there a class forcing which forces the ground model to be both the $\eta$-th inner mantle and the $\eta$-th iterated $\HOD$ of the extension, but for all $i < \eta$ we have $M^i \ne \HOD^i$? Can we moreover get that for all $0 < i,j < \eta$ that $M^i \ne \HOD^j$?
\end{question}

The second case is whether one sequence can ``leapfrog'' over the other.

\begin{question}
Let $\eta$ be an ordinal. Is there a class forcing which forces the sequence of inner mantles to have length $\eta$ and forces that for all $i < \eta$ that $M^i = \HOD^{2i}$? Is there a class forcing which forces the iterated $\HOD$ sequence to have length $\eta$ and forces that for all $i < \eta$ that $\HOD^i = M^{2i}$?
\end{question}

\printbibliography

\end{document}